\documentclass[11pt]{article}

\usepackage[T1]{fontenc}
\usepackage[utf8]{inputenc}
\usepackage{lmodern}
\usepackage{amsmath,amssymb,amsthm,mathtools}
\usepackage[a4paper,margin=29mm]{geometry}
\usepackage{microtype}
\usepackage{booktabs,array}
\usepackage{enumitem}
\usepackage[colorlinks=true,linkcolor=blue,citecolor=blue,urlcolor=blue]{hyperref}

\newtheorem{theorem}{Theorem}[section]
\newtheorem{proposition}[theorem]{Proposition}
\newtheorem{corollary}[theorem]{Corollary}
\newtheorem{lemma}[theorem]{Lemma}
\theoremstyle{definition}

\theoremstyle{remark}
\newtheorem{remark}[theorem]{Remark}

\newcommand{\Will}{\mathcal W}
\newcommand{\El}{\mathcal E}
\newcommand{\R}{\mathbb R}
\newcommand{\Sph}{\mathbb S}
\newcommand{\dd}{\,\mathrm d}
\newcommand{\Area}{\operatorname{Area}}
\newcommand{\Diff}{\operatorname{Diff}}
\newcommand{\id}{\operatorname{id}}
\newcommand{\BL}{\mathrm{BL}}
\newcommand{\TV}{\mathrm{TV}}
\newcommand{\Var}{\mathbf V}
\newcommand{\Gr}{\mathrm G}
\newcommand{\Span}{\operatorname{span}}
\newcommand{\vol}{\operatorname{vol}}
\newcommand{\moduli}{\mathfrak M_1}

\numberwithin{equation}{section}

\title{Hopf Reduction and Multiplicity-Resolved Endpoints\\
for the Classical Willmore Flow of Hopf Tori}
\author{
Mohameden Ahmedou\thanks{Mathematisches Institut, Justus-Liebig-Universit\"at
Gie\ss en, Arndtstrasse 2, 35392 Gie\ss en, Germany.
E-mail: \texttt{Mohameden.Ahmedou@math.uni-giessen.de}.}
\and
Ruben Jakob\thanks{Mathematics Department, Technion--Israel Institute of
Technology, 3200003 Haifa, Israel.
E-mail: \texttt{rubenj@technion.ac.il}.}}
\date{September 22, 2026}

\hypersetup{
 pdfauthor={Mohameden Ahmedou and Ruben Jakob},
 pdftitle={Hopf Reduction and Multiplicity-Resolved Endpoints for the Classical Willmore Flow of Hopf Tori}
}

\begin{document}

\maketitle

\begin{abstract}
We establish an exact parametrized reduction of the classical Willmore flow of Hopf tori in the round three-sphere to the spherical elastic flow.  In
Willmore-flow time the reduction is
$(\partial_t\gamma)^{\perp}=-4\nabla_{L^2}\mathcal E(\gamma)$; its global
reconstruction uses a pullback-circle-bundle normalization and does not require preservation of simplicity along flow lines.  Combining the
known global subconvergence of the spherical elastic flow with Pozzetta's full convergence theorem then yields full smooth convergence of flow lines, modulo domain diffeomorphisms, for every simply parametrized Hopf torus
at arbitrary initial energy.  We then obtain quantitative conclusions for the
unreparametrized trajectory: finite normal $L^2$-metric length, total-variation convergence of the pullback area measures, 
bounded-Lipschitz convergence of the image measures, and explicit tail estimates.  The density of the full varifold endpoint equals the covering multiplicity of the limiting elastic profile,
and Pinkall's marked flat lattices converge.  In particular, a simple initial
Hopf immersion with energy arbitrarily close to $4\pi^2$ from above can
smoothly converge to some parametrization of the Clifford torus with multiplicity two, although its limiting conformal class in moduli space 
is exactly the square class.
\end{abstract}

\medskip
\noindent\textbf{Keywords.}
Willmore flow; Hopf torus; elastic flow; full convergence;
\L ojasiewicz--Simon inequality; varifold multiplicity; conformal modulus.

\smallskip
\noindent\textbf{2020 Mathematics Subject Classification.}
Primary 53E40; Secondary 49Q20, 53C42, 58J35.

\section{Introduction}

Let $\Sigma$ be a smooth, compact, connected torus and let
$F\colon\Sigma\to\Sph^3$ be a smooth immersion into the unit round
three-sphere.  We use the normalization
\begin{equation}
 \label{eq:W-definition}
 \Will(F):=\int_\Sigma\left(1+\frac14|H_F|^2\right)\dd\mu_F.
\end{equation}
Here $g_F:=F^*g_{\Sph^3}$ is the induced metric, $\mu_F$ is its area
measure, and $H_F$ is the trace mean-curvature vector in $\Sph^3$.
We denote by $A_F$ the second fundamental form in $\Sph^3$ and by
$A_F^\circ:=A_F-\frac12g_F\otimes H_F$ its trace-free part.
For a family $F_t$ we write $\mu_t:=\mu_{F_t}$.
The normal Laplacian is $\Delta^\perp=\operatorname{tr}_{g_F}(\nabla^\perp)^2$;
surface-normal projections are taken within $T\Sph^3$ unless the Euclidean
ambient space is specified.  The classical
Willmore flow is
\begin{equation}
 \label{eq:WF}
 \partial_tF_t=-G_{F_t},
 \qquad G_F:=\nabla_{L^2}\Will(F).
\end{equation}
For a normal solution it satisfies
\begin{equation}
 \label{eq:W-dissipation}
 -\frac{\dd}{\dd t}\Will(F_t)
 =\int_\Sigma|G_{F_t}|^2\dd\mu_t
 =\int_\Sigma|\partial_tF_t|^2\dd\mu_t.
\end{equation}

If $\pi\colon\Sph^3\to\Sph^2$ is the Hopf map, a Hopf torus is the
immersed inverse image of a smooth regular closed curve in $\Sph^2$.
Pinkall's construction \cite{Pinkall1985} converts its geometry into the
geometry of the profile.  In the normalizations used here, the basic
identities are
\begin{equation}
 \label{eq:dictionary-intro}
 \Will(F)=\pi\El(\gamma),
 \qquad
 D\pi_F(G_F)=4\nabla_{L^2}\El(\gamma),
 \qquad
 \El(\gamma):=\int_{\Sph^1}(1+|\kappa_\gamma|^2)\dd s_{\gamma}.
\end{equation}
Here and below, the projected identity is understood pointwise after pullback
from the profile to the Hopf torus; see Proposition \ref{prop:dictionary} below.  The factor $4$ is important: a Willmore solution induces the standard
elastic flow after the time change $\tau=4t$.  With this clock, all energy
and speed identities are mutually consistent.

For suitably parametrized Hopf tori, global existence within the Hopf class
and smooth subconvergence modulo diffeomorphisms were established in
\cite{JakobHopf}.  The a.e.-multiplicity formulation of simplicity and examples with a twice-covered Clifford endpoint are given in
\cite{JakobMultiplicity}.  Independently, Pozzetta's convergence theorem for
elastic flows promotes smooth subconvergence in $\Sph^2$ to full convergence
by a \L ojasiewicz--Simon argument \cite{PozzettaElastic}.  The all-energy
convergence mechanism used below combines this spherical elastic-flow theory
with a global parametrized reconstruction of the Hopf surface.

The analytical inputs and the endpoint conclusions have distinct roles.  The
Hopf--Willmore identities originate in \cite{JakobHopf}; global existence and
smooth subconvergence of the spherical elastic profile are supplied by
\cite{DallAcquaEtAl2018}; and full profile convergence is supplied by
\cite{PozzettaElastic}.  We prove a pullback-circle-bundle normalization,
identify the exact Willmore--elastic time scale, and reconstruct the global
surface flow without assuming that simplicity is preserved.  The resulting
all-energy smooth surface convergence is then combined with quantitative and
geometric endpoint analysis.  Our
endpoint theorem adds quantitative finite normal $L^2$-metric length,
unreparametrized transport estimates for the pullback area and image measures,
an explicit density and covering-multiplicity formula for the resulting
varifold endpoint, and convergence of Pinkall's marked lattice.  
Weak varifold convergence is also an immediate consequence of full smooth convergence modulo domain diffeomorphisms; the transport argument supplies the independent quantitative measure control.  These conclusions belong to the symmetry-reduced Hopf setting of the Willmore flow with ambient space $\Sph^3$. They neither duplicate the $8\pi$-threshold theorem for the classical Willmore flow of tori of revolution in $\R^3$ \cite{DallAcquaMullerSchaetzleSpener2024}, 
nor are they related to the recent singularity analysis of this flow in \cite{DallAcquaMullerRuppSchlierf2025}, nor do they apply to the 
different ``constrained Willmore flow'' \cite{DallAcquaMullerRuppSchlierf2025} preserving the conformal class 
along its flow lines.  For the broader analytical landscape, see \cite{LanMartinoRiviereSurvey}.

We use throughout the a.e.-multiplicity notion of simplicity from
\cite[Definition~1.1]{JakobMultiplicity}, also used in
\cite[Definition~4.3]{JakobMIWF}: a smooth immersion
$F\colon\Sigma\to\Sph^3$ is \emph{simple} if
\begin{equation}
 \label{eq:ae-simple}
 \#F^{-1}(z)=1
 \quad\text{for $\mathcal H^2$-almost every }z\in F(\Sigma).
\end{equation}
The profile need not be embedded: transverse self-intersections are compatible with
\eqref{eq:ae-simple}.  Proposition~\ref{prop:normalization} turns this
a.e.-multiplicity hypothesis into a global one-sheeted parametrized Hopf model.

\begin{theorem}[Parametrized reduction and all-energy convergence]
\label{thm:main}
Let $F_0\colon\Sigma\to\Sph^3$ be a smooth simple parametrized Hopf
immersion in the sense of \eqref{eq:ae-simple} and
\eqref{eq:Hopf-factorization}, and let
$F_t$, $t\in[0,T_{\max})$, be the maximal smooth normal solution of the
classical Willmore flow \eqref{eq:WF} with initial datum $F_0$.  Then the
following assertions hold.
\begin{enumerate}[label=\textnormal{(\roman*)},leftmargin=8mm]
 \item The solution exists smoothly for every $t\geq0$.  There are smooth
 regular closed profiles $\gamma_t$, their abstract parametrized Hopf tori
 $M_{\gamma_t}$, standard Hopf immersions
 $X_{\gamma_t}\colon M_{\gamma_t}\to\Sph^3$, and diffeomorphisms
 $Q_t\colon\Sigma\to M_{\gamma_t}$ such that
 \begin{equation}
  \label{eq:parametrized-reduction-main}
  F_t=X_{\gamma_t}\circ Q_t,
  \qquad
  (\partial_t\gamma_t)^\perp=-4\nabla_{L^2}\El(\gamma_t).
 \end{equation}
 Consequently,
 \begin{equation}
  \label{eq:finite-time-multiplicity-main}
  \#F_t^{-1}(z)=\#\gamma_t^{-1}(\pi(z))
  \quad\text{at every}\,\,z\in F_t(\Sigma).
 \end{equation}
 No preservation of simplicity at positive times is required or asserted.

 \item There are a smooth regular closed critical point
 $\gamma_\infty\colon\Sph^1\to\Sph^2$ of $\El$, a smooth Hopf immersion
 $F_\infty\colon\Sigma\to\Sph^3$, and diffeomorphisms
 $\Psi_t\in\Diff(\Sigma)$ such that, for every $k\in\mathbb N_0$,
 \begin{equation}
  \label{eq:smooth-main}
  F_t\circ\Psi_t\longrightarrow F_\infty
  \quad\text{in }C^k(\Sigma,\R^4).
 \end{equation}
 In particular $F_\infty$ is Willmore and
 \begin{equation}
  \label{eq:limit-energy-main}
  \Will_\infty:=\lim_{t\to\infty}\Will(F_t)
  =\Will(F_\infty)=\pi\El(\gamma_\infty).
 \end{equation}

\end{enumerate}
\end{theorem}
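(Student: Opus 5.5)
The proof runs through the spherical elastic flow and then back up to the surface. The main inputs are Proposition~\ref{prop:normalization} (the pullback-circle-bundle normalization) and Proposition~\ref{prop:dictionary} (the Hopf dictionary \eqref{eq:dictionary-intro}).

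\textbf{Step 1: normalization and the elastic flow downstairs.} Proposition~\ref{prop:normalization} turns the a.e.\ simplicity hypothesis \eqref{eq:ae-simple} on $F_0$ into a global one-sheeted model $F_0=X_{\gamma_0}\circ Q_0$. Here $M_{\gamma_0}=\gamma_0^*\Sph^3$ is the pullback Hopf circle bundle over $\Sph^1$ and $Q_0\colon\Sigma\to M_{\gamma_0}$ is a diffeomorphism. Next run the standard elastic flow $\partial_\tau\tilde\gamma=-\nabla_{L^2}\El(\tilde\gamma)$ in $\Sph^2$ from $\gamma_0$. By \cite{DallAcquaEtAl2018} it exists smoothly for all $\tau\ge0$ and subconverges smoothly modulo reparametrization. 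Put $\gamma_t:=\tilde\gamma_{4t}$, so that $\partial_t\gamma_t=-4\nabla_{L^2}\El(\gamma_t)$.

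\textbf{Step 2: lift to the surface and identify with $F_t$.} The bundles $M_{\gamma_t}$ form a smooth family of principal $\Sph^1$-bundles over $\Sph^1$, all trivial. Choose a smooth family of bundle isomorphisms $M_{\gamma_0}\to M_{\gamma_t}$ covering the identity of $\Sph^1$; the pullback construction gives these canonically, for instance by parallel transport of the connection in $t$. This yields a smooth family $\tilde F_t:=X_{\gamma_t}\circ\Phi_t\circ Q_0$ with $\tilde F_0=F_0$. By Proposition~\ref{prop:dictionary}, $G_{\tilde F_t}$ is horizontal and projects to $4\nabla_{L^2}\El(\gamma_t)$. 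Its velocity $\partial_t\tilde F_t$ therefore differs from $-G_{\tilde F_t}$ only by a tangential vector field. Integrating the time-dependent tangential field on $\Sigma$ (compact domain) gives diffeomorphisms $\phi_t$ such that $\tilde F_t\circ\phi_t$ is a normal solution of \eqref{eq:WF} with initial datum $F_0$. Uniqueness of smooth solutions of the parabolic Willmore flow forces $F_t=\tilde F_t\circ\phi_t$ on $[0,T_{\max})$, hence $T_{\max}=\infty$. Setting $Q_t:=\Phi_t\circ Q_0\circ\phi_t$ gives \eqref{eq:parametrized-reduction-main}; the normal relation for $\gamma_t$ is unaffected by tangential changes. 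Formula \eqref{eq:finite-time-multiplicity-main} then follows pointwise. $X_{\gamma_t}$ is injective on each fibre and maps the fibre over $s$ onto $\pi^{-1}(\gamma_t(s))$, so preimages of $z$ are in bijection with $\gamma_t^{-1}(\pi(z))$. Simplicity is never used after time $0$.

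\textbf{Step 3: full convergence and the limit.} By Pozzetta's theorem \cite{PozzettaElastic}, the subconvergence from \cite{DallAcquaEtAl2018} improves to full smooth convergence: there are $\sigma_t\in\Diff(\Sph^1)$ with $\gamma_t\circ\sigma_t\to\gamma_\infty$ in every $C^k$, where $\gamma_\infty$ is a regular critical point of $\El$. Lift $\sigma_t$ to bundle maps $M_{\gamma_t\circ\sigma_t}\to M_{\gamma_t}$, and trivialize all bundles near $t=\infty$ by a fixed identification with $M_{\gamma_\infty}$. Continuity of the pullback construction in $C^k$ then gives $\Psi_t$ with $F_t\circ\Psi_t\to F_\infty:=X_{\gamma_\infty}\circ\Theta$ for a fixed diffeomorphism $\Theta\colon\Sigma\to M_{\gamma_\infty}$.

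\textbf{Main obstacle.} The hard point is the fibre (gauge) direction. The bundle isomorphisms must converge as $t\to\infty$, not merely exist for each $t$. In particular the holonomy or fibre-shift parameter, which is tied to the enclosed area of $\gamma_t$, must converge; it does, because $\gamma_t$ converges in $C^k$. I would handle this by building $\Psi_t$ from the explicit horizontal-lift formula for $X_\gamma$, which depends smoothly on $\gamma$ in $C^k$. The limit $F_\infty$ is Willmore by Proposition~\ref{prop:dictionary}, since $\nabla_{L^2}\El(\gamma_\infty)=0$. Finally, \eqref{eq:limit-energy-main} follows from $\Will(F_t)=\pi\El(\gamma_t)$ together with the $C^2$ convergence.
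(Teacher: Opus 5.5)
Your proposal is correct and follows essentially the same route as the paper: the normalization of Proposition~\ref{prop:normalization}, the elastic flow run in the clock $\tau=4t$, bundle isomorphisms $M_{\gamma_0}\to M_{\gamma_t}$ covering the identity, then a tangential correction plus uniqueness (which the paper justifies in Lemma~\ref{lem:sphere-uniqueness} by reduction to Kuwert--Sch\"atzle), and finally Pozzetta's full convergence lifted through convergent anchored horizontal lifts as in Lemma~\ref{lem:anchored-lift}. The one variation is that you build the isomorphisms by parallel transport in $t$ instead of the paper's lattice maps $C_tC_0^{-1}$ on a fixed interval $[0,T]$; your construction is canonical for all $t\geq0$, and it makes $\partial_t\tilde F_t$ horizontal, so for your purely normal profile flow $\tilde F_t$ is already an exact normal solution and the tangential correction is trivial.
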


The preceding full convergence statement is the surface-level consequence of
the cited curve-flow theorems.  The additional conclusions of the paper are
collected separately in the following endpoint theorem.

For a smooth embedded surface $M\subset\Sph^3$, we write $|M|$ for the
multiplicity-one integral two-varifold induced by the inclusion
$M\hookrightarrow\R^4$.

\begin{theorem}[Multiplicity-resolved endpoint]
\label{thm:endpoint}
Under the hypotheses and with the notation of Theorem~\ref{thm:main}, the
following assertions hold.
\begin{enumerate}[label=\textnormal{(\roman*)},leftmargin=8mm]

 \item The unreparametrized normal trajectory has finite $L^2$-metric
 length,
 \begin{equation}
  \label{eq:finite-length-main}
  \int_0^\infty
  \|\partial_tF_t\|_{L^2(\mu_t)}\dd t<\infty.
 \end{equation}
 More precisely, there are $t_0\geq0$, $C<\infty$, and a
 \L ojasiewicz--Simon exponent $\theta\in(0,\frac12]$ at
 $\gamma_\infty$ such that
 \begin{equation}
  \label{eq:tail-main}
  L_F(t):=\int_t^\infty
  \|\partial_sF_s\|_{L^2(\mu_s)}\dd s
  \leq C\bigl(\Will(F_t)-\Will_\infty\bigr)^\theta
  \quad(t\geq t_0).
 \end{equation}

 \item On the fixed parameter domain, $\nu_t:=\mu_t$ converges in total
 variation to a Radon measure $\nu_\infty$.  The image measures
 $\lambda_t:=(F_t)_\#\mu_t$ converge in bounded-Lipschitz distance to
 $\lambda_\infty$.  If $\Var_t$ is the integral two-varifold induced by
 $F_t$, then the full trajectory converges as integral varifolds and
 \begin{equation}
  \label{eq:varifold-main}
  \Var_t\rightharpoonup\Var_\infty=\Var_{F_\infty},
  \qquad \|\Var_\infty\|=\lambda_\infty.
 \end{equation}
 Quantitatively, with $W_0:=\Will(F_0)$,
 \begin{equation}
  \label{eq:measure-rate-main}
  \|\nu_\infty-\nu_t\|_{\TV}
  +d_{\BL}(\lambda_\infty,\lambda_t)
  \leq4\sqrt{W_0}\,L_F(t).
 \end{equation}

 \item For $\mathcal H^2$-almost every
 $z\in F_\infty(\Sigma)$,
 \begin{equation}
  \label{eq:multiplicity-main}
  \Theta^2(\|\Var_\infty\|,z)
  =\#\gamma_\infty^{-1}(\pi(z)).
 \end{equation}
 Consequently, if $\gamma_\infty$ is an $m$-fold cover of an embedded
 closed curve $\bar\gamma_\infty$, then
 \begin{equation}
  \label{eq:mcover-main}
  \Var_\infty=m\,|\pi^{-1}(\bar\gamma_\infty)|.
 \end{equation}

 \item Let $L(t):=L(\gamma_t)$ for the parametrized profile furnished by
 Theorem~\ref{thm:main}\textnormal{(i)}, and let
 $[A(t)]\in\R/(4\pi\mathbb Z)$ be its oriented area class.  Set
 $L_\infty:=L(\gamma_\infty)$, fix a real representative $A_\infty$ of
 $[A(\gamma_\infty)]$, and, on a final time interval, choose the continuous
 real lift $A(t)$ relative to $A_\infty$ by the short-geodesic-cylinder
 normalization of Lemma~\ref{lem:anchored-lift}.  Then
 \begin{equation}
  \label{eq:pinkall-main}
  L(t)\to L_\infty,
  \qquad A(t)\to A_\infty,
  \qquad
  \Gamma_t\to\Gamma_\infty,
 \end{equation}
 where Pinkall's marked flat lattice is
 \begin{equation}
  \label{eq:lattice-main}
 \Gamma_t=
 \Span_{\mathbb Z}\left\{(2\pi,0),
              \left(\frac{A(t)}2,\frac{L(t)}2\right)\right\}.
 \end{equation}
 Here $\Gamma_t\to\Gamma_\infty$ means convergence in
 $M_2(\R)$ of the marked generator matrices
 \begin{equation}
  \label{eq:marked-generator-convergence}
  B(t):=
  \begin{pmatrix}2\pi&A(t)/2\\0&L(t)/2\end{pmatrix}
  \longrightarrow
  B_\infty:=
  \begin{pmatrix}2\pi&A_\infty/2\\0&L_\infty/2\end{pmatrix},
  \qquad \Gamma_\infty=B_\infty\mathbb Z^2.
 \end{equation}
 Hence the conformal classes converge in
 $\moduli=\mathrm{PSL}(2,\mathbb Z)\backslash\mathbb H_+$, where
 $\mathbb H_+:=\{z\in\mathbb C:\operatorname{Im}z>0\}$:
 \begin{equation}
  \label{eq:modulus-main}
  \left[\frac{A(t)+iL(t)}{4\pi}\right]
  \longrightarrow
  \left[\frac{A_\infty+iL_\infty}{4\pi}\right].
 \end{equation}
\end{enumerate}

Finally, if $e(t):=\Will(F_t)-\Will_\infty$, then, after increasing $t_0$,
\begin{equation}
 \label{eq:energy-rates-main}
 e(t)\leq
 \begin{cases}
  Ce^{-ct},&\theta=\frac12,\\[2mm]
  C(1+t)^{-1/(1-2\theta)},&0<\theta<\frac12.
 \end{cases}
\end{equation}
All constants may depend on the flow and on the limiting critical profile.
\end{theorem}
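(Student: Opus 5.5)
The plan is to reduce everything to the profile flow and transport it back through the reduction of Theorem~\ref{thm:main}(i). Set $\tau=4t$. Then $\gamma_t$ is, up to tangential reparametrization, a solution of the spherical elastic flow converging smoothly to $\gamma_\infty$. By Theorem~\ref{thm:main}(ii), for large $t$ the profile lies in a small $C^k$ neighbourhood of $\gamma_\infty$ modulo reparametrization. There Pozzetta's \L ojasiewicz--Simon inequality holds with some exponent $\theta\in(0,\frac12]$:
\begin{equation*}
 |\El(\gamma)-\El(\gamma_\infty)|^{1-\theta}\le C\|\nabla_{L^2}\El(\gamma)\|_{L^2(\dd s)}.
\end{equation*}

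\textbf{Part (i).} Transfer this inequality to $\Will$ using Proposition~\ref{prop:dictionary}. We have $\Will=\pi\El$. The normal velocity satisfies $\partial_tF_t=-G_{F_t}$, and $G_F$ is orthogonal to the Hopf fibres with $D\pi_F(G_F)=4\nabla_{L^2}\El$. Since $D\pi$ scales horizontal vectors by a fixed constant and the fibres have length $2\pi$, we get
\begin{equation*}
 \|G_{F_t}\|_{L^2(\mu_t)}=c_0\|\nabla_{L^2}\El(\gamma_t)\|_{L^2}
\end{equation*}
for an explicit constant $c_0$. Hence $e(t)^{1-\theta}\le C\|\partial_tF_t\|_{L^2(\mu_t)}$. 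Combining this with \eqref{eq:W-dissipation} gives
\begin{equation*}
 -\frac{\dd}{\dd t}e^\theta=\theta e^{\theta-1}\|\partial_tF_t\|^2\ge c\|\partial_tF_t\|_{L^2(\mu_t)}.
\end{equation*}
Integrating yields \eqref{eq:tail-main} and hence \eqref{eq:finite-length-main}. The same inequality gives $e'\le -ce^{2-2\theta}$, and the ODE comparison produces \eqref{eq:energy-rates-main}.

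\textbf{Part (ii).} For a normal variation, $\partial_t\dd\mu_t=-\langle H_t,\partial_tF_t\rangle\dd\mu_t$. Therefore
\begin{equation*}
 \|\nu_s-\nu_t\|_{\TV}\le\int_t^s\!\int|H||\partial_rF|\dd\mu_r\dd r\le\int_t^s\|H\|_{L^2}\|\partial_rF\|_{L^2}\dd r,
\end{equation*}
and $\|H\|_{L^2(\mu_r)}\le 2\sqrt{\Will(F_r)}\le2\sqrt{W_0}$. For a test function $\varphi$ with $\BL$ norm at most one, we compute $\frac{\dd}{\dd t}\int\varphi\circ F_t\dd\mu_t$. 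This gives a term $\int\langle\nabla\varphi,\partial_tF\rangle\dd\mu_t$, bounded by $\|\partial_tF\|_{L^2}\mu_t(\Sigma)^{1/2}\le\sqrt{W_0}\|\partial_tF\|_{L^2}$, plus the area-variation term already estimated. Summing the contributions gives the factor $4\sqrt{W_0}$ in \eqref{eq:measure-rate-main}, so both families are Cauchy. The limits are identified by \eqref{eq:smooth-main}: $F_t\circ\Psi_t\to F_\infty$ smoothly implies $\Var_t\rightharpoonup\Var_{F_\infty}$, which by uniqueness of limits equals $\lambda_\infty$ on the mass level.

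\textbf{Part (iii).} By the factorization $F_\infty=X_{\gamma_\infty}\circ Q_\infty$ of a Hopf immersion, the preimage count satisfies $\#F_\infty^{-1}(z)=\#\gamma_\infty^{-1}(\pi(z))$, exactly as in \eqref{eq:finite-time-multiplicity-main}. At $\mathcal H^2$-a.e.\ image point the immersion is transverse to itself or locally a union of coinciding sheets. The area formula then gives the density of $\|\Var_F\|$ as the number of sheets through $z$, which is \eqref{eq:multiplicity-main}. The $m$-cover case \eqref{eq:mcover-main} follows immediately.

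\textbf{Part (iv).} Length is continuous under smooth convergence modulo reparametrization, so $L(t)\to L_\infty$. The enclosed area class is continuous in $\R/4\pi\mathbb Z$. For the lift, Lemma~\ref{lem:anchored-lift} provides the short geodesic cylinder between $\gamma_t$ and $\gamma_\infty$, whose signed area tends to zero. This gives $A(t)\to A_\infty$, hence $B(t)\to B_\infty$, and continuity of the quotient map $\mathbb H_+\to\moduli$ gives \eqref{eq:modulus-main}.

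\textbf{Main obstacle.} I expect the hardest step to be the norm identity $\|G_{F}\|_{L^2(\mu)}=c_0\|\nabla\El\|_{L^2}$ with the correct constants. This requires using the horizontality of $G_F$, the fibre length $2\pi$, and the factor of the Hopf projection, and checking consistency with $\Will=\pi\El$ and the time change $\tau=4t$. A related point is that the \L ojasiewicz inequality must be applied to the unreparametrized profile. This is harmless because $\El$ and its gradient norm are invariant under reparametrization.
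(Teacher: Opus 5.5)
Your proposal is correct and follows essentially the paper's route: you transfer the profile \L ojasiewicz--Simon inequality through the exact identity $\|G_F\|_{L^2(\mu_F)}^2=4\pi\|\nabla_{L^2}\El(\gamma)\|_{L^2(\dd s)}^2$ (so $c_0=2\sqrt\pi$), you use the first-variation transport bounds $2\sqrt{W_0}+2\sqrt{W_0}$, you apply the area formula to $F_\infty=X_{\gamma_\infty}\circ Q_\infty^*$, and you use the short-geodesic-cylinder lift, where you should add that $\sigma_{4t}$ is orientation preserving so the area class is unchanged. The deviations are minor and legitimate: you integrate the \L ojasiewicz inequality in Willmore rather than elastic time, you identify the varifold limit directly from the smooth convergence of Theorem~\ref{thm:main}(ii) instead of via integral-varifold compactness plus uniqueness (the paper itself notes this shortcut), and your ``transverse or coinciding sheets'' claim is unnecessary because the area formula alone gives the density.
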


Conformal type and multiplicity carry different information.  For
example, both the once-covered and twice-covered Clifford torus have square
modular class, but their lattice covolumes and limiting varifolds differ by a
factor of two.

\section{Proof strategy}
\label{sec:roadmap}

The argument is organized into seven components.

\begin{center}
\begin{tabular}{@{}>{\raggedright\arraybackslash}p{8mm}%
>{\raggedright\arraybackslash}p{45mm}%
>{\raggedright\arraybackslash}p{75mm}@{}}
\toprule
Part & Input & Output \\
\midrule
1 & A.e. simplicity and Hopf geometry & Parametrized normalization and exact differential identities \\
2 & Parametrized reduction and elastic compactness & Global surface flow and one full limiting profile \\
3 & \L ojasiewicz--Simon inequality & Finite profile length and decay rates \\
4 & Hopf speed identity & Finite normal $L^2$-length of the surface flow \\
5 & First variation and integral-varifold compactness & TV/BL estimates and full varifold convergence \\
6 & Anchored horizontal lifts and Pinkall's formula & Smooth lifted convergence and convergence of marked moduli \\
7 & Area formula for the Hopf lift & Pointwise density and covering multiplicity of the endpoint \\
\bottomrule
\end{tabular}
\end{center}

The logical dependencies are as follows.  The first part derives an explicit
covering normalization from a.e. multiplicity.  The second constructs the
surface flow from the parametrized elastic flow and uses the globality and
subconvergence theorem in
\cite{DallAcquaEtAl2018}, together with the full-convergence theorem in
\cite{PozzettaElastic}.  The remaining parts are
then quantitative or geometric consequences proved below.  The
a.e.-multiplicity condition is used for the initial normalization, while the
endpoint density formula retains any covering multiplicity that develops in
the limiting profile.

\section{The exact Hopf--elastic dictionary}
\label{sec:dictionary}

We identify $\Sph^3$ with the unit quaternions and $\Sph^2$ with the unit
sphere in $\operatorname{Im}\mathbb H$, oriented as the boundary of the
space with ordered basis $(i,j,k)$.  We use the Hopf map
\begin{equation}
 \label{eq:Hopf-map}
 \pi(q)=q^{-1}iq\in\Sph^2\subset\operatorname{Im}\mathbb H.
\end{equation}
To relate this convention to Pinkall's, write
$q=a+ib+jc+kd$ and let $\overline q=a-ib-jc-kd$ be quaternionic
conjugation.  Pinkall's involution and Hopf map are
\[
 \widetilde q=a-ib+jc+kd=-i\overline q\,i,
 \qquad
 \pi_P(q)=\widetilde q\,q
 \in\Sph^3\cap\Span_{\R}\{1,j,k\};
\]
see \cite[p.~380, formula~(1)]{Pinkall1985} and
\cite[formula~(75)]{JakobMIWF}.  Since $q^{-1}=\overline q$ on $\Sph^3$,
\[
 \pi_P(q)=-i\pi(q),\qquad \pi=L_i\circ\pi_P,
 \qquad L_i(y):=iy.
\]
The map $L_i$ is a linear isometry from $\Span_{\R}\{1,j,k\}$ onto
$\operatorname{Im}\mathbb H$ and preserves the orientations determined by
the ordered bases $(1,j,k)$ and $(i,j,k)$.  Thus these two formulas describe
the same circle fibres with isometrically identified base spheres.
In particular,
\[
 \pi(e^{i\varphi}q)=\pi(q),\qquad
 \pi^{-1}(\pi(q))=\{e^{i\varphi}q:\varphi\in\R\}.
\]
The second identity follows because a unit quaternion commutes with $i$
exactly when it belongs to $\{e^{i\varphi}:\varphi\in\R\}$.
The vertical line at $q$ is $\Span_{\R}\{iq\}$ and its orthogonal complement
in $T_q\Sph^3$ is the horizontal space
$\mathcal H_q=\{uq:u\in\Span_{\R}\{j,k\}\}$.
For $u\in\operatorname{Im}\mathbb H$, differentiation gives
\[
 D\pi_q(uq)=q^{-1}(iu-ui)q.
\]
For horizontal $uq$, one has $iu-ui=2iu$, so $D\pi_q$ restricts to a
homothety of factor $2$ on $\mathcal H_q$.

Throughout, smooth means $C^\infty$, and a closed curve is parametrized by
a smooth circle.  Such a curve is regular if its differential is nonzero at
every point of its parameter circle.

We first fix the intrinsic parametrized model.  If
$\gamma\colon B\to\Sph^2$ is a smooth regular closed curve with parameter
circle $B$, set
\begin{equation}
 \label{eq:pullback-model}
 M_\gamma:=\gamma^*\Sph^3
 :=\{(b,q)\in B\times\Sph^3:\gamma(b)=\pi(q)\},
 \qquad
 p_\gamma(b,q):=b,
 \qquad
 X_\gamma(b,q):=q.
\end{equation}
The projection $p_\gamma$ makes $M_\gamma$ a principal circle bundle over
$B$.  Every principal circle bundle over a circle is trivial because
$H^2(B;\mathbb Z)=0$; hence $M_\gamma$ is a smooth, compact, connected torus.
The map
$X_\gamma\colon M_\gamma\to\Sph^3$ is an immersion: if
$(v,w)\in T_{(b,q)}M_\gamma$ and $w=0$, then
$D\gamma_b(v)=D\pi_q(w)=0$, whence $v=0$ because $\gamma$ is regular.

We next record the marked flat presentation of this intrinsic model.  Choose
an orientation and a base point on $B$, and let
$a_\gamma\colon\R/\ell\mathbb Z\to B$ be the corresponding
orientation-preserving parametrization for which
$\gamma\circ a_\gamma$ has constant speed $2$.  Thus
$L(\gamma)=2\ell$.  If $\eta\colon\R\to\Sph^3$ is an anchored horizontal
lift of $\gamma\circ a_\gamma$, there is a unique holonomy class
$[h_\gamma]\in\R/(2\pi\mathbb Z)$ such that
\begin{equation}
 \label{eq:horizontal-holonomy}
 \eta(s+\ell)=e^{-ih_\gamma}\eta(s).
\end{equation}
Choose any real representative $h_\gamma$ and set
\begin{equation}
 \label{eq:parametrized-lattice}
 \Gamma_\gamma:=
 \Span_{\mathbb Z}\{(2\pi,0),(h_\gamma,\ell)\}.
\end{equation}
The formula
\begin{equation}
 \label{eq:lattice-pullback-identification}
 I_\gamma\colon\R^2/\Gamma_\gamma\longrightarrow M_\gamma,
 \qquad
 I_\gamma[\varphi,s]
 :=\bigl(a_\gamma([s]),e^{i\varphi}\eta(s)\bigr)
\end{equation}
defines a diffeomorphism.  Indeed, the two generators in
\eqref{eq:parametrized-lattice} act trivially in
\eqref{eq:lattice-pullback-identification}, and the map is a fibrewise
equivariant bijection covering $a_\gamma$.  In these coordinates,
\begin{equation}
 \label{eq:standard-Hopf-model}
 (X_\gamma\circ I_\gamma)[\varphi,s]=e^{i\varphi}\eta(s),
 \qquad
 (p_\gamma\circ I_\gamma)[\varphi,s]=a_\gamma([s]).
\end{equation}
Thus $M_\gamma$ is intrinsic, while
$\R^2/\Gamma_\gamma$ is its marked flat presentation.  We orient
$M_\gamma$ by the ordered coordinates $(\varphi,s)$.  For simple initial
data, the normalizing diffeomorphism below in Proposition \ref{prop:normalization} transfers this orientation to
$\Sigma$; conformal classes are taken with respect to this orientation.
The sign convention in \eqref{eq:horizontal-holonomy} gives
\begin{equation}
 \label{eq:holonomy-area}
 [h_\gamma]=[A(\gamma)/2]\in\R/(2\pi\mathbb Z),
\end{equation}
where $[A(\gamma)]\in\R/(4\pi\mathbb Z)$ is the oriented spherical
area of the parametrized cycle.  Thus \eqref{eq:parametrized-lattice} is
Pinkall's lattice \cite[Proposition~1]{Pinkall1985}, including all cases in
which the curve $\gamma$ is immersed but not embedded, in particular cases
in which $\gamma$ traverses its trace multiple times.

For clarity, a \emph{parametrized Hopf immersion} is a smooth immersion
$F\colon\Sigma\to\Sph^3$ of a smooth, compact, connected torus for which
there are a smooth circle $B$, a proper smooth submersion
$u\colon\Sigma\to B$ with connected fibres, and a smooth regular closed
curve $\gamma\colon B\to\Sph^2$ such that
\begin{equation}
 \label{eq:Hopf-factorization}
 \pi\circ F=\gamma\circ u.
\end{equation}
The circle $B$ and the maps $u,\gamma$ are part of the parametrized data.
The smoothness of the profile is also forced by the factorization when $F$
and $u$ are smooth: the submersion $u$ is surjective, since its nonempty
image is open and compact in the connected circle $B$, and it admits smooth
local sections.  For any such section $\zeta\colon U\to\Sigma$, with
$U\subset B$ open and $u\circ\zeta=\id_U$, one has
\[
 \gamma|_U=\pi\circ F\circ\zeta.
\]
Thus $\gamma$ is smooth on $B$.
Proposition~\ref{prop:normalization} shows that
$F(\Sigma)=\pi^{-1}(\gamma(B))$.  The standard Hopf models and the initial
immersions constructed in \cite{JakobHopf,JakobMultiplicity} have this
factorization.

\begin{proposition}[Parametrized Hopf normalization]
\label{prop:normalization}
Let $F\colon\Sigma\to\Sph^3$ be a parametrized Hopf immersion, with data
$(B,u,\gamma)$ as in \eqref{eq:Hopf-factorization}.  Then there is an integer
$m_F\geq1$ such that the explicitly defined map
\begin{equation}
 \label{eq:normalizing-cover}
 q_F\colon\Sigma\longrightarrow M_\gamma,
 \qquad q_F(x):=(u(x),F(x)),
 \qquad F=X_\gamma\circ q_F,
\end{equation}
is a smooth $m_F$-sheeted covering.  Moreover, for every $z\in F(\Sigma)$,
\begin{equation}
 \label{eq:normalization-count}
 \#F^{-1}(z)=m_F\,\#\gamma^{-1}(\pi(z)).
\end{equation}
If $F$ is simple in the a.e. sense \eqref{eq:ae-simple}, then
$m_F=1$, the profile is one-to-one almost everywhere on its trace, and
$q_F$ is a diffeomorphism.
\end{proposition}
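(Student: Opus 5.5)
The plan is to show first that $q_F$ is a local diffeomorphism between compact connected surfaces; it is then a finite covering, and the counting identity follows from the fibre structure.

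\textbf{Local diffeomorphism.} Since $\pi\circ F=\gamma\circ u$, the map $q_F$ takes values in $M_\gamma$ and is smooth there. It satisfies $X_\gamma\circ q_F=F$. The map $F$ is an immersion, so $dF=dX_\gamma\circ dq_F$ is injective. Hence $dq_F$ is injective, and since $\dim\Sigma=\dim M_\gamma=2$ it is an isomorphism. A local diffeomorphism from a compact manifold to a connected one is surjective, because its image is open and closed. It is also proper, so it is a finite covering with some constant sheet number $m_F\geq1$.

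\textbf{Fibre structure and counting.} The identity $p_\gamma\circ q_F=u$ shows that $q_F$ maps each fibre $u^{-1}(b)$ into the circle $p_\gamma^{-1}(b)\cong\pi^{-1}(\gamma(b))$. The fibre $u^{-1}(b)$ is a connected compact $1$-manifold, hence a circle. Its preimage $q_F^{-1}(p_\gamma^{-1}(b))$ is exactly $u^{-1}(b)$, so the restriction $u^{-1}(b)\to p_\gamma^{-1}(b)$ is itself an $m_F$-sheeted covering of circles.

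Now fix $z\in F(\Sigma)$. Then $X_\gamma^{-1}(z)=\{(b,z):\gamma(b)=\pi(z)\}$, which is in bijection with $\gamma^{-1}(\pi(z))$. This set is finite, since $\gamma$ is an immersion of a compact circle. Because $F^{-1}(z)=q_F^{-1}(X_\gamma^{-1}(z))$ and each point has exactly $m_F$ preimages, we obtain $\#F^{-1}(z)=m_F\,\#\gamma^{-1}(\pi(z))$. As a byproduct, $F(\Sigma)=X_\gamma(M_\gamma)=\pi^{-1}(\gamma(B))$.

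\textbf{Simple case.} This is the step needing care. Assume \eqref{eq:ae-simple}. The counting identity gives $\#F^{-1}(z)\geq m_F$ for every $z\in F(\Sigma)$. The image $F(\Sigma)$ has positive $\mathcal H^2$-measure, since it is an immersed surface. So some $z$ has $\#F^{-1}(z)=1$, which forces $m_F=1$, and $q_F$ is a diffeomorphism.

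Then $\#\gamma^{-1}(y)=1$ for $\mathcal H^2$-a.e.\ $z$ with $y=\pi(z)$. To pass to the profile, let $E\subset\gamma(B)$ be the set of $y$ with $\#\gamma^{-1}(y)\geq2$. Then $\pi^{-1}(E)\cap F(\Sigma)$ is $\mathcal H^2$-null. Since $\pi$ is a Riemannian submersion up to the factor $2$ with fibres of length $2\pi$, the coarea formula (or a local product structure) gives $\mathcal H^2(\pi^{-1}(E))=c\,\mathcal H^1(E)$ with $c>0$. Hence $\mathcal H^1(E)=0$, so $\gamma$ is one-to-one $\mathcal H^1$-a.e.\ on its trace.

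I expect this measure-theoretic transfer to be the only step needing more than bookkeeping. If the coarea formula is awkward to apply directly, I would verify it locally: cover $\gamma(B)$ by finitely many embedded arcs, trivialize the bundle over each, and use that the Hopf preimage of an arc is locally bi-Lipschitz to (arc)$\times$(circle).
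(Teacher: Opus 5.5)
Your proposal is correct and follows essentially the same route as the paper. Both arguments run as follows: $q_F$ is a local diffeomorphism because $F$ is an immersion, and surjectivity plus properness make it an $m_F$-sheeted covering. The fibre identity $X_\gamma^{-1}(z)=\{(b,z):b\in\gamma^{-1}(\pi(z))\}$ then gives the count, positivity of $\mathcal H^2(F(\Sigma))$ forces $m_F=1$ in the simple case, and a coarea/Fubini argument in local Hopf-cylinder coordinates yields $\mathcal H^1$-a.e.\ injectivity of the profile on its trace.
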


\begin{proof}
The factorization \eqref{eq:Hopf-factorization} shows that $q_F$ takes values
in the circle bundle $M_\gamma$ by its definition in \eqref{eq:pullback-model}, and again by \eqref{eq:pullback-model} 
the projection $X_\gamma$ onto the second factor of $M_\gamma$ gives 
$F=X_\gamma\circ q_F$.  Its differential is
\begin{equation}
 \label{eq:normalizing-map-differential}
 D(q_F)_x(v)=\bigl(Du_x(v),DF_x(v)\bigr).
\end{equation}
If this vector vanishes, then $DF_x(v)=0$, and hence $v=0$ because $F$ is an
immersion.  Since the source and target are two-dimensional,
$D(q_F)_x$ is an isomorphism at every $x$; thus $q_F$ is a local
diffeomorphism.

The target $M_\gamma$ is connected, and the image of $q_F$ is open because
$q_F$ is a local diffeomorphism.  It is also compact, hence closed, because
$\Sigma$ is compact and $M_\gamma$ is Hausdorff.  Therefore $q_F$ is
surjective.  A proper surjective local diffeomorphism is a finite covering;
denote its sheet number by $m_F\geq1$.  In particular, for every $b\in B$ and
$q\in\pi^{-1}(\gamma(b))$, the restriction
\begin{equation}
 F|_{C_b}\colon C_b:=u^{-1}(b)\longrightarrow
 \pi^{-1}(\gamma(b))
 \label{eq:fibre-covering}
\end{equation}
has exactly $m_F$ preimages over $q$.  Here $C_b$ is a circle by the
proper-submersion theorem, since it is a connected compact one-manifold without boundary. Thus \eqref{eq:fibre-covering} is an $m_F$-sheeted covering, or equivalently, its absolute degree is $m_F$.

For every $z\in X_\gamma(M_\gamma)=F(\Sigma)$ one has the exact identity
\begin{equation}
 X_\gamma^{-1}(z)
 =\{(b,z):b\in\gamma^{-1}(\pi(z))\}.
 \label{eq:standard-model-fibre}
\end{equation}
Each point on the right has exactly $m_F$ preimages under $q_F$, which proves
\eqref{eq:normalization-count}.  If $F$ is simple, then
\eqref{eq:normalization-count} and the positivity of
$\mathcal H^2(F(\Sigma))$ force $m_F=1$.  Hence $q_F$ is a one-sheeted
covering and therefore a diffeomorphism.  The same identity then yields
$\#\gamma^{-1}(\pi(z))=1$ for $\mathcal H^2$-almost every image point 
$z\in F(\Sigma)$.
Applying the coarea formula to the local Hopf-cylinder charts, or simply
Fubini's theorem in the coordinates
\eqref{eq:lattice-pullback-identification}, gives
$\#\gamma^{-1}(y)=1$ for $\mathcal H^1$-almost every
$y\in\gamma(B)$.
\end{proof}

Let now $F=X_\gamma\circ Q$, where $Q\colon\Sigma\to M_\gamma$ is a
diffeomorphism.  This class contains the simple initial data by
Proposition~\ref{prop:normalization} and, by
Proposition~\ref{prop:parametrized-reduction} below, every later time slice
of the flow as well.  

\begin{proposition}[Normalized Hopf dictionary]
\label{prop:dictionary}
For the functionals in \eqref{eq:W-definition} and
\eqref{eq:dictionary-intro},
\begin{align}
 \Will(F)&=\pi\El(\gamma),
 \label{eq:W=piE}\\
 D\pi_F(G_F)
 &=4\bigl(\nabla_{L^2}\El(\gamma)\bigr)
       \circ p_\gamma\circ Q,
 \label{eq:projected-gradient}\\
 \int_\Sigma|G_F|^2\dd\mu_F
 &=4\pi\int_B|\nabla_{L^2}\El(\gamma)|^2\dd s_\gamma,
 \label{eq:gradient-norm-dictionary}
\end{align}
where $p_\gamma$ denotes the bundle projection which was defined in
equation \eqref{eq:pullback-model}, and $\dd s_\gamma$ is spherical
arclength measure on the parametrized profile.
Consequently, if a smooth family $F_t=X_{\gamma_t}\circ Q_t$ of these
models solves \eqref{eq:WF}, then the corresponding profiles satisfy
\begin{equation}
 \label{eq:profile-t}
 (\partial_t\gamma_t)^\perp=-4\nabla_{L^2}\El(\gamma_t).
\end{equation}
Thus
\begin{equation}
 \label{eq:standard-elastic-clock}
 \bar\gamma_\tau:=\gamma_{\tau/4}
 \quad\Longrightarrow\quad
 (\partial_\tau\bar\gamma_\tau)^\perp
 =-\nabla_{L^2}\El(\bar\gamma_\tau).
\end{equation}
Moreover,
\begin{equation}
 \label{eq:speed-clock}
 \|\partial_tF_t\|_{L^2(\mu_t)}
 =2\sqrt\pi\,
 \|\nabla_{L^2}\El(\bar\gamma_{4t})\|_{L^2(\dd s_{\bar\gamma_{4t}})}.
\end{equation}
\end{proposition}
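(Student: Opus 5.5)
Since $Q$ is a diffeomorphism and all quantities in the statement are invariant under reparametrization of the domain, I will work on $M_\gamma$ in the flat coordinates $X_\gamma\circ I_\gamma[\varphi,s]=e^{i\varphi}\eta(s)$ of \eqref{eq:standard-Hopf-model}. Here $\eta$ is the horizontal lift of the constant-speed-$2$ reparametrization $\gamma\circ a_\gamma$. The plan is to compute the geometry of $X_\gamma$ directly in these coordinates.

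\textbf{First fundamental form.} The vertical field $\partial_\varphi$ maps to $ie^{i\varphi}\eta$, which has unit length. The field $\partial_s$ maps to $e^{i\varphi}\eta'$, which is horizontal. Because $D\pi$ is a homothety of factor $2$ on $\mathcal H_q$ and $|(\gamma\circ a_\gamma)'|=2$, this vector also has unit length. So the metric is flat, $g=d\varphi^2+ds^2$, and $d\mu=d\varphi\,ds$. The $s$-curves have unit speed and the profile has arclength $2\,ds$.

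\textbf{Second fundamental form.} Let $N$ be the horizontal unit normal, $N=e^{i\varphi}\tilde N$, where $\tilde N$ is the horizontal lift of $\tfrac12$ times the spherical normal $\nu$ of $\gamma$, suitably rotated. I will compute $A$ from the quaternionic structure; this is Pinkall's classical computation. The expected result is
\[
A(\partial_s,\partial_s)=\kappa,\quad A(\partial_s,\partial_\varphi)=\pm1,\quad A(\partial_\varphi,\partial_\varphi)=0,
\]
with $\kappa$ the geodesic curvature of $\gamma$. Hence $H=\kappa$, and
\[
\Will=\int\bigl(1+\tfrac14\kappa^2\bigr)\,d\varphi\,ds=2\pi\cdot\tfrac12\int(1+\tfrac14\kappa^2)\,ds_\gamma .
\]
This is where the normalization bites. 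The curvature of $\gamma$ is measured against arclength $ds_\gamma=2\,ds$, and $\kappa_{\text{surface}}$ and $\kappa_\gamma$ differ by a factor that must be tracked. Matching \eqref{eq:W=piE} forces $H_F=2\kappa_\gamma$, so that the integrand is $1+\kappa_\gamma^2$ and the result is $\pi\El$. Fixing these factors of $2$ is the first place errors can enter, and I will check them on the Clifford torus: there $\gamma$ is a great circle, $\El=2\pi$, and $\Will=2\pi^2$.

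\textbf{Gradient identity \eqref{eq:projected-gradient}.} The Willmore gradient in $\Sph^3$ for this normalization is a multiple of
\[
\bigl(\Delta^\perp H+Q(A^\circ)H\bigr),
\]
and is normal. Since all data are $\varphi$-invariant, $\Delta^\perp H$ reduces to an $s$-derivative computation. For that I need the normal connection, which is $\nabla^\perp_{\partial_s}N=0$ and $\nabla^\perp_{\partial_\varphi}N=0$ up to the torsion term from the Hopf twist. The term $Q(A^\circ)H$ uses $|A^\circ|^2=\tfrac14\kappa_{\text{surf}}^2+\dots+2$. Substituting yields a scalar function of $s$ times $N$. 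Since $D\pi(N)=2\cdot\tfrac12\nu$-type scaling, applying $D\pi$ gives exactly the elastic gradient
\[
\nabla_{L^2}\El=\bigl(2\kappa''+\kappa^3+2\kappa\bigr)\nu
\]
(the sphere version, in $s_\gamma$), times $4$ after the chain-rule factors from $ds_\gamma=2\,ds$ are included.

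The main obstacle is this step: getting the curvature term $Q(A^\circ)$, the ambient-curvature term in $\Sph^3$, and the factors $2$ to combine into precisely $4\nabla_{L^2}\El$. To control it, I will cross-check with the first-variation route rather than only the Euler--Lagrange formula. For a normal variation $\partial_\epsilon\gamma=\psi\nu$, the horizontal lift gives a normal variation of $F$ with speed $\tfrac12\psi$ (by the homothety). Differentiating $\Will=\pi\El$ along it gives
\[
\int\langle G_F,V\rangle\,d\mu=\pi\int\langle\nabla\El,\psi\nu\rangle\,ds_\gamma .
\]
Rewriting the left side as $2\pi\cdot\tfrac12\int\langle D\pi G_F,\tfrac12\psi\nu\rangle\cdot\tfrac12\,ds_\gamma$ forces the factor $4$. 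For this I need that $G_F$ is itself $\varphi$-invariant and normal-horizontal, which follows from $S^1$-equivariance of $\Will$ under the fibre action $q\mapsto e^{i\varphi}q$.

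\textbf{Consequences.} Identity \eqref{eq:gradient-norm-dictionary} then follows by integrating $|G_F|^2=\tfrac14|D\pi G_F|^2$ with $d\mu=d\varphi\,ds$, using $\int d\varphi=2\pi$ and $ds=\tfrac12ds_\gamma$: this gives $\tfrac14\cdot16\cdot\pi=4\pi$. For \eqref{eq:profile-t}, I differentiate $\pi\circ F_t=\gamma_t\circ p_{\gamma_t}\circ Q_t$ in $t$. This gives $D\pi(\partial_tF)=\partial_t\gamma+(\text{tangential})$, and taking normal components and using \eqref{eq:WF} yields the claim. The time change \eqref{eq:standard-elastic-clock} is the chain rule. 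Finally, \eqref{eq:speed-clock} combines \eqref{eq:gradient-norm-dictionary} with $\nabla\El(\gamma_t)=\nabla\El(\bar\gamma_{4t})$.
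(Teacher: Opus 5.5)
Your bookkeeping for \eqref{eq:gradient-norm-dictionary}, \eqref{eq:profile-t}, \eqref{eq:standard-elastic-clock} and \eqref{eq:speed-clock} is correct and is what the paper does: integrating $|G_F|^2=\frac14|D\pi G_F|^2$ against $\dd\varphi\,\dd s$, differentiating $\pi\circ F_t=\gamma_t\circ p_{\gamma_t}\circ Q_t$, and applying the chain rule in time. The gap is that the identity \eqref{eq:W=piE}, on which everything else rests, is never derived.

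You guess $A(\partial_s,\partial_s)=\kappa$ and notice that this would give $\pi\int(1+\frac14\kappa^2)\dd s_\gamma\neq\pi\El(\gamma)$. You then declare that ``matching \eqref{eq:W=piE} forces $H_F=2\kappa_\gamma$''. That uses the conclusion to fix the one nontrivial constant in its own proof. Your Clifford check cannot repair it, because $\kappa\equiv0$ there and only the area term $2\pi^2$ is tested.

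The missing step is the actual computation. The unit-speed horizontal curves $s\mapsto e^{i\varphi}\eta(s)$ project to speed-$2$ curves. Equivalently, $\pi$ is a Riemannian submersion onto the sphere of radius $\frac12$, in which the geodesic curvature of $\gamma$ is $2k$. Let $\widehat N$ be the unit normal satisfying $D\pi(\widehat N)=2n$; the lift of $\frac12\nu$ that you propose has length $\frac14$. Then $A(\partial_s,\partial_s)=2k\widehat N$, $A(\partial_s,\partial_\varphi)=\pm\widehat N$ and $A(\partial_\varphi,\partial_\varphi)=0$. Hence $H=2k\widehat N$ and $\Will=\int(1+k^2)\dd\varphi\,\dd s=\pi\El(\gamma)$. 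A test that does detect the factor is the Hopf torus over a latitude circle at polar angle $\theta$. It is the flat torus with principal curvatures $\tan(\theta/2)$ and $-\cot(\theta/2)$, so $|H|=2\cot\theta=2k$.

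The gradient step contains a second, independent error. Your target $(2\kappa''+\kappa^3+2\kappa)\nu$ is the $L^2$-gradient of $\int\kappa^2\dd s$ on the unit sphere alone. The length term of $\El$ contributes $-\kappa\nu$, so $\nabla_{L^2}\El=(2\kappa''+\kappa^3+\kappa)\nu$. You can confirm this on latitude circles, where $\El=2\pi/\sin\theta$.

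With the correct $A$, and with $\nabla^\perp\widehat N=0$ exactly (the normal bundle is a line bundle, so there is no twist correction), one gets $\Delta^\perp H=8k''\widehat N$ and $|A^\circ|^2=2(k^2+1)$, where $'$ is the derivative in spherical arclength. Therefore $G_F=2(2k''+k^3+k)\widehat N$, whose image under $D\pi$ is $4\nabla_{L^2}\El(\gamma)$. Against your stated formula the linear terms would not match.

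Your first-variation route, on the other hand, is a sound alternative to this fourth-order computation. The factor $\frac{\pi}{4}$ in your rewriting is correct. Together with the invariance of $G_F$ under the fibre rotation, it yields \eqref{eq:projected-gradient} without any explicit Euler--Lagrange operator. That is more economical than the paper's route through \eqref{eq:Hopf-second-fundamental}--\eqref{eq:surface-gradient-explicit}. But it differentiates $\Will=\pi\El$ along a family of Hopf tori, so it presupposes \eqref{eq:W=piE} for all profiles near $\gamma$. It therefore cannot stand in for the missing computation of $A$.
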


\begin{proof}
Let
\[
 \rho_\gamma\colon\R^2\longrightarrow\R^2/\Gamma_\gamma,
 \qquad
 q_\gamma:=I_\gamma\circ\rho_\gamma\colon\R^2\longrightarrow M_\gamma
\]
be the quotient map followed by the diffeomorphism in
\eqref{eq:lattice-pullback-identification}.  We perform the local
calculation on the universal covering and descend it only afterwards.  By
\eqref{eq:standard-Hopf-model}, the lifted immersion and the lifted bundle
projection are
\begin{equation}
 \label{eq:lifted-Hopf-model}
 \widehat X_\gamma:=X_\gamma\circ q_\gamma,
 \qquad
 \widehat X_\gamma(\varphi,s)=e^{i\varphi}\eta(s),
 \qquad
 p_\gamma\circ q_\gamma(\varphi,s)=a_\gamma([s]), 
\end{equation}
where $\eta\colon\R\to\Sph^3$ is an anchored horizontal
lift of $\gamma\circ a_\gamma\colon\R/l\mathbb Z\to \Sph^2$, 
satisfying relation \eqref{eq:horizontal-holonomy} in terms of the unique holonomy shift $h_\gamma \in [0,2\pi)$ of $\gamma$ with respect to $\pi$.
	
These formulae are compatible with the corresponding lattice $\Gamma_{\gamma}$ in \eqref{eq:parametrized-lattice} and do not merely 
hold in the interior domain of some coordinate rectangle.  Indeed,
\begin{align*}
 \widehat X_\gamma(\varphi+2\pi,s)
 &=\widehat X_\gamma(\varphi,s),\\
 \widehat X_\gamma(\varphi+h_\gamma,s+\ell)
 &=e^{i(\varphi+h_\gamma)}\eta(s+\ell)
 =e^{i\varphi}\eta(s)
 =\widehat X_\gamma(\varphi,s),
\end{align*}
where the second identity uses \eqref{eq:horizontal-holonomy}.  Thus all
geometric quantities calculated below are $\Gamma_\gamma$-periodic and
descend uniquely to $M_\gamma$.

Put $\widetilde\gamma(s):=\gamma(a_\gamma([s]))$.  The curve
$\widetilde\gamma$ has speed $2$, so its spherical arclength parameter is
$r=2s$.  Choose the oriented unit normal $n_\gamma$ along $\gamma$ and write
$\kappa_\gamma=k_\gamma n_\gamma$; in the following covering-space
formulae these fields are evaluated at $a_\gamma([s])$.  The horizontal
dilation of $\pi$ is $2$, whereas the Hopf fibres have unit speed.  Hence
there is a unique choice of unit normal $\widehat N_\gamma$ along
$\widehat X_\gamma$ for which
\begin{equation}
 \label{eq:normal-dilation}
 D\pi_{\widehat X_\gamma}(\widehat N_\gamma)=2n_\gamma.
\end{equation}
Both sides are invariant under $\Gamma_\gamma$, so
$\widehat N_\gamma$ is invariant as well and descends to a unit normal of
$X_\gamma$ on $M_\gamma$.
Moreover,
$(\partial_s\widehat X_\gamma,
  \partial_\varphi\widehat X_\gamma)$ is an orthonormal frame.  Direct
differentiation of \eqref{eq:lifted-Hopf-model} gives, in this ordered frame,
\begin{equation}
 \label{eq:Hopf-second-fundamental}
 q_\gamma^*A_{X_\gamma}
 =\widehat N_\gamma
 \begin{pmatrix}2k_\gamma&1\\1&0\end{pmatrix},
 \qquad
 H_{X_\gamma}\circ q_\gamma=2k_\gamma\widehat N_\gamma.
\end{equation}

The half-open rectangle
$\mathcal R:=[0,2\pi)\times[0,\ell)$ is a measurable fundamental domain for
the lattice generated by $(2\pi,0)$ and $(h_\gamma,\ell)$.  It is not being
identified with a Cartesian product model of $M_\gamma$; its horizontal
edges are glued with the holonomy shift $h_\gamma$.  
Since the lifted metric is $\dd\varphi^2+\dd s^2$, \eqref{eq:Hopf-second-fundamental} and $\dd r=2\dd s$ give
\begin{align}
 \Will(X_\gamma)
 &=\int_{\mathcal R}(1+k_\gamma^2)\dd\varphi\dd s \notag\\
 &=2\pi\int_0^\ell(1+k_\gamma^2)\dd s
 =\pi\int_0^{L(\gamma)}(1+|\kappa_\gamma|^2)\dd r
 =\pi\El(\gamma).
 \label{eq:energy-descent}
\end{align}

In spherical arclength, the $L^2$-gradient of the elastic energy is
\begin{equation}
 \nabla_{L^2}\El(\gamma)
 =\bigl(2\partial_r^2k_\gamma+k_\gamma^3+k_\gamma\bigr)n_\gamma.
 \label{eq:elastic-gradient-explicit}
\end{equation}
Because the normal bundle is one-dimensional,
$\nabla^\perp\widehat N_\gamma=0$.  Using
$\partial_s=2\partial_r$ in the normal Laplacian of the second identity in
\eqref{eq:Hopf-second-fundamental} gives
\[
 (\Delta^\perp H_{X_\gamma})\circ q_\gamma
 =8\partial_r^2k_\gamma\widehat N_\gamma.
\]
The first identity in \eqref{eq:Hopf-second-fundamental} also yields
$|(A_{X_\gamma})^\circ|^2\circ q_\gamma=2(k_\gamma^2+1)$.  Hence the
Willmore operator
$G=\frac12(\Delta^\perp H+|A^\circ|^2H)$ satisfies
\begin{equation}
 G_{X_\gamma}\circ q_\gamma
 =2\bigl(2\partial_r^2k_\gamma
                  +k_\gamma^3+k_\gamma\bigr)\widehat N_\gamma.
 \label{eq:surface-gradient-explicit}
\end{equation}
Combining \eqref{eq:normal-dilation},
\eqref{eq:elastic-gradient-explicit}, and
\eqref{eq:surface-gradient-explicit}, we obtain on $\R^2$
\begin{align*}
 \bigl(D\pi_{X_\gamma}(G_{X_\gamma})\bigr)\circ q_\gamma
 &=D\pi_{\widehat X_\gamma}
      (G_{X_\gamma}\circ q_\gamma)\\
 &=4\bigl(\nabla_{L^2}\El(\gamma)\bigr)
      \circ p_\gamma\circ q_\gamma.
\end{align*}
Since $q_\gamma$ is surjective, this is precisely the intrinsic pointwise
identity
\begin{equation}
 \label{eq:intrinsic-projected-gradient}
 D\pi_{X_\gamma}(G_{X_\gamma})
 =4\bigl(\nabla_{L^2}\El(\gamma)\bigr)\circ p_\gamma
 \qquad\text{on }M_\gamma.
\end{equation}
Thus the passage from the covering coordinates to $M_\gamma$ is an actual
quotient descent.

The same covering calculation gives
$|G_{X_\gamma}|\circ q_\gamma
 =2|\nabla_{L^2}\El(\gamma)|\circ p_\gamma\circ q_\gamma$.  Integrating
over the same fundamental domain $\mathcal R$ as in 
\eqref{eq:energy-descent} therefore yields
\begin{align}
 \int_{M_\gamma}|G_{X_\gamma}|^2\dd\mu_{X_\gamma}
 &=2\pi\int_0^\ell
       4|\nabla_{L^2}\El(\gamma)|^2\dd s \notag\\
 &=4\pi\int_0^{L(\gamma)}
       |\nabla_{L^2}\El(\gamma)|^2\dd r.
 \label{eq:gradient-norm-descent}
\end{align}

Finally, the Willmore functional and its normal $L^2$-gradient are natural
under domain diffeomorphisms.  For $F=X_\gamma\circ Q$ one has
\[
 \Will(F)=\Will(X_\gamma),
 \qquad
 G_F=G_{X_\gamma}\circ Q,
 \qquad
 \dd\mu_F=Q^*(\dd\mu_{X_\gamma}).
\]
Consequently, \eqref{eq:energy-descent},
\eqref{eq:intrinsic-projected-gradient}, and
\eqref{eq:gradient-norm-descent} give
\eqref{eq:W=piE}--\eqref{eq:gradient-norm-dictionary}.  This is also a
self-contained derivation of identities (32)--(34) of \cite{JakobHopf} in
the present conventions.

For such a family set $u_t=p_{\gamma_t}\circ Q_t$.  Differentiating
$\pi\circ F_t=\gamma_t\circ u_t$ gives
\[
 D\pi_{F_t}(\partial_tF_t)
 =(\partial_t\gamma_t)\circ u_t+D\gamma_t\circ u_t(\partial_tu_t).
\]
The second term is tangent to the profile.  Since
$\partial_tF_t=-G_{F_t}$, taking the normal component along $\gamma_t$ and
using \eqref{eq:projected-gradient} gives
\[
 \bigl((\partial_t\gamma_t)^\perp\bigr)\circ u_t
 =-4\bigl(\nabla_{L^2}\El(\gamma_t)\bigr)\circ u_t.
\]
The map $u_t$ is surjective, so this proves \eqref{eq:profile-t}.  A
tangential profile velocity can be removed by solving an ODE on $\Sph^1$.
Substitution $\tau=4t$ yields
\eqref{eq:standard-elastic-clock}.  Finally, \eqref{eq:speed-clock} is the
square root of \eqref{eq:gradient-norm-dictionary}, because
$\gamma_t=\bar\gamma_{4t}$.
\end{proof}

\begin{remark}[Clock normalization]
\label{rem:factor-four}
The three identities
$\Will=\pi\El$, $-\Will'=\|G_F\|_2^2$, and
\eqref{eq:gradient-norm-dictionary} force the factor $4$ in
\eqref{eq:profile-t}.  Indeed, we obtain in this way:
\[
 \pi\frac{\dd}{\dd t}\El(\gamma_t)
 =\frac{\dd}{\dd t}\Will(F_t)
 =-4\pi\|\nabla_{L^2}\El(\gamma_t)\|_{L^2(\dd s_{\gamma_t})}^2.
\]
Throughout the paper, $t$ denotes Willmore-flow time and $\tau=4t$ 
denotes standard elastic-flow time.
This distinction leaves the unparametrized flow orbit unchanged, while the
metric-length, dissipation, and decay identities depend on the stated time
normalization.
\end{remark}

\section{Global and full convergence of the profile}
\label{sec:profile-convergence}

We first prove the parametrized reduction needed to pass from the initial
a.e.-simplicity to all later time slices.  In particular, we do not assume
that simplicity is preserved by the flow.

\begin{lemma}[Short-time uniqueness in the round sphere]
\label{lem:sphere-uniqueness}
For every smooth closed immersion $F_0\colon\Sigma\to\Sph^3$, the normal
Willmore equation \eqref{eq:WF} has at most one smooth solution with initial
datum $F_0$ on a common time interval.
\end{lemma}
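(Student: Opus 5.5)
We reduce the statement to the known uniqueness theory for fourth-order quasilinear parabolic systems of immersions in Euclidean space by embedding the problem into $\R^4$. Via the inclusion $\Sph^3\subset\R^4$ we regard $F_t$ as a family of maps $\Sigma\to\R^4$. The Willmore gradient $G_F$ in $\Sph^3$ is a normal vector field along $F$ that is tangent to $\Sph^3$. In local coordinates it has the form
\[
G_F=\tfrac12\,\Delta_{g_F}^2F\big|^{\perp}+\text{lower order terms},
\]
where the lower order terms are smooth functions of $DF,\dots,D^3F$ and of $F$ itself; the dependence on $F$ enters through the normal projection onto $T\Sph^3$ and the constant ambient curvature. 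Since the flow is a \emph{normal} flow, the equation $\partial_tF=-G_F$ is only degenerate parabolic. Uniqueness will therefore be obtained by the standard DeTurck-type argument, as used for Willmore flow in $\R^n$ by Kuwert--Schätzle and in general Riemannian ambient spaces by Mantegazza--Martinazzi and others.

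The key steps are as follows.

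\textbf{Step 1.} Suppose $F^1_t$ and $F^2_t$ are two smooth solutions on $[0,T]$ with the same datum $F_0$. Fix the reference metric $\bar g:=g_{F_0}$. For each $a\in\{1,2\}$, solve the harmonic-map-type DeTurck ODE/PDE for diffeomorphisms $\phi^a_t$ with $\phi^a_0=\id$. This means choosing the tangential field
\[
V(F)=\Delta_{g_F}\bigl(\text{DeTurck vector field of } g_F \text{ relative to } \bar g\bigr)
\]
at fourth order, so that $\widetilde F^a_t:=F^a_t\circ\phi^a_t$ solves a strictly parabolic quasilinear fourth-order system
\[
\partial_t\widetilde F=-\tfrac12\Delta_{g_{\widetilde F}}^2\widetilde F+\text{l.o.t.}
\]
Because $F^a$ is smooth and given, the equation for $\phi^a$ is itself a parabolic system for maps $\Sigma\to\Sigma$ with smooth coefficients. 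Hence $\phi^a$ exists, remains a diffeomorphism, and is smooth on a short interval.

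\textbf{Step 2.} Both $\widetilde F^1$ and $\widetilde F^2$ solve the same strictly parabolic system with the same initial datum. Standard uniqueness for such systems then gives $\widetilde F^1=\widetilde F^2$; this follows from an energy estimate on the difference in $H^2$, or from Solonnikov/Eidelman linear theory combined with a contraction argument. The constraint $|F|=1$ is preserved automatically, since the velocity is tangent to $\Sph^3$, so the Euclidean theory applies.

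\textbf{Step 3.} The diffeomorphisms are recovered from $\widetilde F$ alone. The map $\phi^a$ solves $\partial_t\phi^a=-W(\widetilde F)\circ\phi^a$, where $W(\widetilde F)$ denotes the pulled-back DeTurck field. This is an ODE driven by the now common $\widetilde F$, so $\phi^1=\phi^2$ and hence $F^1=F^2$. A continuation argument over the maximal subinterval on which the two solutions agree extends the conclusion to the whole common interval.

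The main obstacle is Step 1 combined with Step 3: the right choice of fourth-order DeTurck term. It must simultaneously make the system strictly parabolic and let the gauge be reconstructed by an ODE rather than a PDE. My first attempt would be the approach of Mantegazza--Martinazzi, where one works with the equation for $\phi$ directly as a parabolic system for $\phi^{-1}$. Failing that, I would use an alternative route: write each solution near $F_0$ as a normal graph $F_0+\text{(tangential reparametrization)}+w\,N$ over the fixed immersion and reduce to a scalar-type fourth-order parabolic equation for the normal height. The reparametrization is determined by an ODE, and uniqueness for the height equation follows from the standard $L^2$ energy estimate.
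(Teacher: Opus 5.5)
Your proposal is correct in outline, but it takes a different route from the paper. The paper does no gauge-fixing. It proves the pointwise identity $G_F^{\R^4}=G_F^{\Sph^3}$ for sphere-valued immersions, using $H_F^{\R^4}=H_F^{\Sph^3}-2F$, $(A_F^{\R^4})^\circ=(A_F^{\Sph^3})^\circ$ and the agreement of the normal Laplacians. It then observes that $f_s:=F_{2s}$ solves the Euclidean normal Willmore equation and quotes Kuwert--Sch\"atzle's Proposition~1.1 for uniqueness. A short supplementary computation shows that if two normal solutions satisfy $F^{(2)}_t=F^{(1)}_t\circ\phi_t$ with $\phi_0=\id$, then $DF^{(1)}_t(\partial_t\phi_t)=0$, so $\phi_t=\id$.

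You instead reprove the uniqueness theorem itself with a fourth-order DeTurck gauge, working directly with the spherical operator in $\R^4$-coordinates. This makes you independent of the special ambient identity and of the factor-$2$ clock conversion, and the argument would work in any ambient manifold. Your ODE recovery in Step~3 does the job of the paper's normal-gauge computation and removes the ``up to reparametrization'' ambiguity automatically. The price is that you must construct the gauge and solve a fourth-order quasilinear system for it, which the paper outsources to a citation.

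Two details of your write-up need correcting, though neither is a real gap.

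First, with $\widetilde F=F\circ\phi$ as you wrote it, it is $\phi^{-1}$, not $\phi$, that satisfies an ODE. Moreover, the driving field is the fourth-order field $V$, not $W$. It is cleaner to write $F=\widetilde F\circ\psi$ from the start. Differentiating and using $G_F=G_{\widetilde F}\circ\psi$ gives $\partial_t\psi=-V(\widetilde F)\circ\psi$, which is an ODE once $\widetilde F$ is known. The sign and normalization of $V$ must be fixed. With $W^k=g^{ij}(\Gamma^k_{ij}-\bar\Gamma^k_{ij})$ you need $V=-\tfrac12\Delta_{g_{\widetilde F}}W$; the choice $+\Delta W$ would make the tangential symbol backward parabolic. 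Since $W(\widetilde F)\circ\psi=-\tau_{g_F,\bar g}(\psi)$, for given $F$ the same equation becomes $\partial_t\psi=-\tfrac12\,D\psi\,\Delta_{g_F}\bigl((D\psi)^{-1}\tau_{g_F,\bar g}(\psi)\bigr)$. This is a forward parabolic system with principal part $-\tfrac12\Delta_{g_F}^2\psi$, which resolves the ``main obstacle'' you flag.

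Second, the remark that $|F|=1$ is preserved is unnecessary for uniqueness. Both gauged solutions are sphere-valued by construction. You only need the operator extended smoothly to a neighbourhood of $\Sph^3$, and the paper's identity supplies the natural extension $G^{\R^4}$.
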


\begin{proof}
Regard $F$ also as an immersion into $\R^4$.  In the present normalization
the Euclidean and spherical Willmore operators are, respectively,
\begin{align}
 G_F^{\R^4}
 &=\frac12\left(\Delta_{\R^4}^{\perp}H_F^{\R^4}
       +Q\bigl((A_F^{\R^4})^\circ\bigr)(H_F^{\R^4})\right),
 \label{eq:Euclidean-Willmore-operator}\\
 G_F^{\Sph^3}
 &=\frac12\left(\Delta_{\Sph^3}^{\perp}H_F^{\Sph^3}
       +Q\bigl((A_F^{\Sph^3})^\circ\bigr)(H_F^{\Sph^3})\right),
 \label{eq:spherical-Willmore-operator}
\end{align}
where
$Q(A^\circ)(\xi)=g^{ik}g^{jl}
 \langle A^\circ_{ij},\xi\rangle A^\circ_{kl}$.
With the trace convention used here,
$H_F^{\R^4}=H_F^{\Sph^3}-2F$, and hence
\[
 \frac14\int_\Sigma|H_F^{\R^4}|^2\dd\mu_F
 =\int_\Sigma\left(1+\frac14|H_F^{\Sph^3}|^2\right)\dd\mu_F.
\]
Thus the spherical functional is the restriction of the Euclidean Willmore
functional to sphere-valued immersions.  More precisely, the radial part of
$A_F^{\R^4}$ is pure trace, so
\begin{equation}
 (A_F^{\R^4})^\circ=(A_F^{\Sph^3})^\circ,
 \qquad
 \langle(A_F^{\Sph^3})^\circ,F\rangle=0.
 \label{eq:tracefree-ambient-identity}
\end{equation}
Moreover $\nabla_X^{\perp,\R^4}F=0$ for every $X\in T\Sigma$, and on normal
fields tangent to $\Sph^3$ the Euclidean and spherical normal connections
agree.  Consequently
\begin{align}
 \Delta_{\R^4}^{\perp}H_F^{\R^4}
 &=\Delta_{\Sph^3}^{\perp}H_F^{\Sph^3},
 \label{eq:normal-Laplacian-ambient-identity}\\
 Q\bigl((A_F^{\R^4})^\circ\bigr)(H_F^{\R^4})
 &=Q\bigl((A_F^{\Sph^3})^\circ\bigr)(H_F^{\Sph^3}).
 \label{eq:Q-ambient-identity}
\end{align}
Substitution in \eqref{eq:Euclidean-Willmore-operator}--
\eqref{eq:spherical-Willmore-operator} gives the pointwise identity
\begin{equation}
 G_F^{\R^4}=G_F^{\Sph^3},
 \qquad \langle G_F^{\R^4},F\rangle=0.
 \label{eq:ambient-Willmore-identity}
\end{equation}
In particular, every sphere-valued solution of \eqref{eq:WF} is also a
solution of the Euclidean normal Willmore equation in exactly the same time
normalization.

Proposition~1.1 of \cite{KuwertSchaetzle2002} gives uniqueness for the
Euclidean normal equation
\[
 \partial_sf_s=-\bigl(\Delta^\perp H_{f_s}
                         +Q(A_{f_s}^\circ)(H_{f_s})\bigr).
\]
Its velocity is twice that in \eqref{eq:WF}: if $F_t$ solves
\eqref{eq:WF}, then $f_s:=F_{2s}$ solves the displayed equation.
Thus two sphere-valued normal solutions with the same parametrized initial
immersion agree on their common interval.
The role of the normal gauge can also be seen directly:
let $\{F_t^{(1)}\}$ and $\{F_t^{(2)}\}$ be two solutions to equation
\eqref{eq:WF} on some nonempty compact interval $[0,T]$,
such that $F_t^{(2)}=F_t^{(1)}\circ\phi_t$ holds
on $\Sigma$ for every $t \in [0,T]$, where
$\{\phi_t\}$ is an arbitrary family of smooth automorphisms of 
$\Sigma$ with $\phi_0=\id$. Differentiating this
identity and using the naturality of $G$ yields:
\[
DF_t^{(1)}\circ\phi_t(\partial_t\phi_t)=0 \quad
\textnormal{for every} \,\, t\in [0,T].
\]
Since $F_t^{(1)}$ is an immersion, we infer $\partial_t\phi_t=0$
for every $t\in [0,T]$ and hence $\phi_t \equiv \id$.  The uniqueness
therefore holds for the concrete parametrization of a general flow line of
\eqref{eq:WF}.
\end{proof}

\begin{proposition}[Parametrized Hopf reduction]
\label{prop:parametrized-reduction}
Let $F_0$ satisfy the hypotheses of Theorem~\ref{thm:main}, and let
$\gamma_0$ and $q_0\colon\Sigma\to M_{\gamma_0}$ be supplied by
Proposition~\ref{prop:normalization}.  Let $\bar\gamma_\tau$ be the standard
elastic flow
\begin{equation}
 \label{eq:standard-profile-Cauchy}
 (\partial_\tau\bar\gamma_\tau)^\perp
 =-\nabla_{L^2}\El(\bar\gamma_\tau),
 \qquad \bar\gamma_0=\gamma_0,
\end{equation}
and put $\gamma_t:=\bar\gamma_{4t}$.  Then the Willmore flow with initial
datum $F_0$ is global, and for every $t\geq0$ there is a diffeomorphism
\begin{equation}
 \label{eq:time-slice-normalization}
 Q_t\colon\Sigma\longrightarrow M_{\gamma_t}
 \quad\text{such that}\quad
 F_t=X_{\gamma_t}\circ Q_t.
\end{equation}
Consequently \eqref{eq:finite-time-multiplicity-main} holds.  This conclusion
does not require $X_{\gamma_t}$, and hence does not require $F_t$ to be
simple.
\end{proposition}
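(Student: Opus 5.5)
We construct the surface flow directly from the elastic profile flow and then identify it with $F_t$ through the uniqueness of Lemma~\ref{lem:sphere-uniqueness}.

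\emph{Step 1: the profile flow.} By \cite{DallAcquaEtAl2018}, the standard elastic flow \eqref{eq:standard-profile-Cauchy} with smooth regular initial datum $\gamma_0\colon B\to\Sph^2$ exists for all $\tau\ge0$ and stays smooth and regular. We fix it in a normal gauge on the fixed circle $B$, so that $\partial_\tau\bar\gamma_\tau=-\nabla_{L^2}\El(\bar\gamma_\tau)$. Then $\gamma_t:=\bar\gamma_{4t}$ is a smooth family with $\partial_t\gamma_t=-4\nabla_{L^2}\El(\gamma_t)$.

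\emph{Step 2: a candidate surface flow on a fixed domain.} The pullback bundles $M_{\gamma_t}\subset B\times\Sph^3$ form a smooth one-parameter family of submanifolds. We trivialize them by horizontal transport in the time direction. For $(b,q)\in M_{\gamma_0}$, let $q(t)$ solve the ODE
\[
\dot q(t)=\text{the horizontal lift at }q(t)\text{ of }\partial_t\gamma_t(b),
\]
so that $\pi(q(t))=\gamma_t(b)$ for all $t$. Since the lift is explicit ($\dot q=\tfrac12\,\cdot$ preimage under the homothety $D\pi_q|_{\mathcal H_q}$), the ODE is linear-growth on the compact $\Sph^3$ and has global solutions. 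The resulting maps $\Phi_t\colon M_{\gamma_0}\to M_{\gamma_t}$, $(b,q)\mapsto(b,q(t))$, are diffeomorphisms, because the flow commutes with the $e^{i\varphi}$-action and covers $\id_B$. Set
\[
\widetilde F_t:=X_{\gamma_t}\circ\Phi_t\circ q_0\colon\Sigma\to\Sph^3 .
\]
Its velocity $\partial_t\widetilde F_t$ is horizontal with $D\pi(\partial_t\widetilde F_t)=\partial_t\gamma_t\circ u_0$. The vector $-G_{\widetilde F_t}$ is normal to the Hopf torus, and the normal of a Hopf torus is horizontal. By \eqref{eq:projected-gradient} it also satisfies $D\pi(-G_{\widetilde F_t})=-4\nabla_{L^2}\El(\gamma_t)\circ u_0=\partial_t\gamma_t\circ u_0$. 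Since $D\pi$ is injective on $\mathcal H$, we get $\partial_t\widetilde F_t=-G_{\widetilde F_t}$ exactly. Hence $\widetilde F_t$ is a global smooth normal Willmore solution with $\widetilde F_0=X_{\gamma_0}\circ q_0=F_0$.

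\emph{Step 3: identification and conclusions.} Lemma~\ref{lem:sphere-uniqueness} gives $F_t=\widetilde F_t$ on $[0,T_{\max})$. Since $\widetilde F_t$ exists smoothly for all $t$, maximality forces $T_{\max}=\infty$. Setting $Q_t:=\Phi_t\circ q_0$, which is a diffeomorphism because $q_0$ is one by Proposition~\ref{prop:normalization} with $m_F=1$, yields \eqref{eq:time-slice-normalization}. The multiplicity formula \eqref{eq:finite-time-multiplicity-main} then follows from \eqref{eq:standard-model-fibre} applied to $X_{\gamma_t}$, composed with the bijection $Q_t$. No simplicity of $\gamma_t$ is used anywhere.

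The main obstacle is Step 2, specifically two points. First, one needs the precise claim that $\partial_t\widetilde F_t$ equals the normal vector $-G$, not merely that their $D\pi$-projections agree. This requires checking that both vectors are horizontal: $G$ is horizontal because $\partial_\varphi$ is tangent and vertical, and $\partial_t\widetilde F_t$ is horizontal by construction. Second, one must check that horizontal transport preserves the pullback-bundle structure globally in $t$. I would verify the latter through the equivariance $q\mapsto e^{i\varphi}q$ of the horizontal distribution.
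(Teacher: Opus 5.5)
Your argument is correct, but it identifies $M_{\gamma_0}$ with $M_{\gamma_t}$ by a different mechanism than the paper.

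\emph{The paper's route.} The paper lifts each profile in the spatial direction, tracks continuous holonomy representatives $h_t$, and identifies the tori through the explicit shears $C_tC_0^{-1}$ of the auxiliary lattices $\Lambda_t$. The resulting family $Y_t=X_{\gamma_t}\circ J_t\circ q_0$ only satisfies $(\partial_tY_t)^\perp=-G_{Y_t}$, because $\partial_tY_t$ generally has a vertical, hence tangential, component. So a tangential correction $\chi_t$ is needed before Lemma~\ref{lem:sphere-uniqueness} applies.

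\emph{Your route.} You parallel-transport in the time direction along $t\mapsto\gamma_t(b)$. Then $\partial_t\widetilde F_t$ is horizontal, the normal of a Hopf torus is horizontal, and $D\pi|_{\mathcal H}$ is injective. So in the normal gauge $\widetilde F_t$ is already an exact normal solution. This removes the holonomy bookkeeping and the $\chi_t$ step, and it shows the Willmore flow pointwise as the horizontal lift of the normal elastic flow.

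\emph{The cost of the gauge.} The statement fixes some solution $\bar\gamma_\tau$ of the geometric equation, which may carry a tangential velocity. In that case your normal-gauge profile is $\gamma_t\circ\psi_t$ with $\psi_t\in\Diff(B)$ and $\psi_0=\id$. You should then compose $\Phi_t\circ q_0$ with $(b,q)\mapsto(\psi_t(b),q)\colon M_{\gamma_t\circ\psi_t}\to M_{\gamma_t}$ (cf.\ Lemma~\ref{lem:anchored-lift}(d)) to get $Q_t$ for the given $\gamma_t$. The paper's construction handles an arbitrary gauge directly and stays inside the flat-lattice presentation reused later for Pinkall's moduli.

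\emph{A loose step.} Your justification of global solvability of the lift ODE is imprecise, since the vector field you describe is only defined over the fibre of $\gamma_t(b)$. The clean version is the linear equation $\dot q=-\tfrac12\,i\,q\,\partial_t\gamma_t(b)$ on $\mathbb H$. It has global solutions, and these automatically satisfy $|q(t)|=1$ and $\pi(q(t))=\gamma_t(b)$.
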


\begin{proof}
In the notation of \cite{DallAcquaEtAl2018},
\begin{equation}
 E_\lambda=\frac12\int_{\Sph^1}|\kappa|^2\dd s+\lambda L,
 \qquad \El=2E_{1/2},
 \label{eq:curve-energy-conventions}
\end{equation}
and therefore
$\nabla_{L^2}\El=2\nabla_{L^2}E_{1/2}$.  If
$\gamma^{\mathrm{DA}}_\rho$ denotes the flow in the time variable of that
paper, then $\bar\gamma_\tau=\gamma^{\mathrm{DA}}_{2\tau}$.  This factor-$2$
change comes solely from \eqref{eq:curve-energy-conventions}; it is distinct
from the factor-$4$ Willmore-to-elastic change $\tau=4t$.
The hypotheses of \cite[Theorem~1.1(i)]{DallAcquaEtAl2018} hold here: the
initial profile is a smooth regular closed curve in $\Sph^2$ and the length
coefficient is $\lambda=1/2>0$.  That theorem therefore shows that
\eqref{eq:standard-profile-Cauchy} exists smoothly for every $\tau\geq0$ and
remains regular.

Fix $T<\infty$ and identify the common parameter circle $B$ 
of the elastic flow with $\R/(2\pi\mathbb Z)$.  
Choose Hopf-fibre anchors smoothly in $t$ and let
$\eta_t\colon\R\to\Sph^3$ be the corresponding horizontal lifts of
$s\mapsto\gamma_t([s])$.  Choose the real holonomy representatives $h_t$
continuously on $[0,T]$, such that there holds
\begin{equation}
 \eta_t(s+2\pi)=e^{-ih_t}\eta_t(s) \quad \textnormal{for every} 
 \,\,t\in [0,T],
 \label{eq:finite-time-holonomy}
\end{equation}
similarly to \eqref{eq:horizontal-holonomy}, and introduce the following auxiliary period lattices and their generator matrices:  
\begin{equation}
 \Lambda_t:=\Span_{\mathbb Z}\{(2\pi,0),(h_t,2\pi)\},
 \qquad
 C_t:=\begin{pmatrix}2\pi&h_t\\0&2\pi\end{pmatrix},
 \qquad \Lambda_t=C_t\mathbb Z^2.
 \label{eq:finite-time-period-lattices}
\end{equation}
These are used only in order to identify the pullback bundles; unlike  
$\Gamma_{\gamma_t}$, they are not the arclength-normalized 
Pinkall lattices according to \eqref{eq:parametrized-lattice}.
The maps
\begin{equation}
 \mathcal I_t\colon\R^2/\Lambda_t\longrightarrow M_{\gamma_t},
 \qquad
 \mathcal I_t[\varphi,s]
 :=\bigl([s],e^{i\varphi}\eta_t(s)\bigr)
 \label{eq:finite-time-pullback-charts}
\end{equation}
are diffeomorphisms by the same deck-transformation argument as in
\eqref{eq:lattice-pullback-identification}.  Therefore
\begin{equation}
 \begin{alignedat}{2}
 \mathcal J_t&\colon\R^2/\Lambda_0\longrightarrow\R^2/\Lambda_t,
 &\qquad \mathcal J_t[x]&:=[C_tC_0^{-1}x],\\
 J_t&:=\mathcal I_t\circ\mathcal J_t\circ\mathcal I_0^{-1}
 \colon M_{\gamma_0}\longrightarrow M_{\gamma_t}
 \end{alignedat}
 \label{eq:finite-time-lattice-identification}
\end{equation}
are smooth diffeomorphisms.  Since
$C_tC_0^{-1}(\varphi,s)
=(\varphi+(h_t-h_0)s/(2\pi),s)$, the map $J_t$ covers the identity of the
profile circle:
\begin{equation}
 p_{\gamma_t}\circ J_t=p_{\gamma_0},
 \qquad J_0=\id.
 \label{eq:Jt-covers-identity}
\end{equation}
Define on the fixed torus $\Sigma$
\begin{equation}
 Y_t:=X_{\gamma_t}\circ J_t\circ q_0.
 \label{eq:constructed-Hopf-family}
\end{equation}
Then $Y_0=X_{\gamma_0}\circ q_0=F_0$.  
Moreover, on account of \eqref{eq:pullback-model} and 
\eqref{eq:Jt-covers-identity} we have
\begin{equation}
 \pi\circ Y_t
 =\gamma_t\circ p_{\gamma_0}\circ q_0.
 \label{eq:constructed-family-projection}
\end{equation}
Set $u_0:=p_{\gamma_0}\circ q_0$.  Differentiating
\eqref{eq:constructed-family-projection} with respect to $t$, and using that
$u_0$ is independent of $t$, gives
\[
 D\pi_{Y_t}(\partial_tY_t)=(\partial_t\gamma_t)\circ u_0.
\]
The normal component along the profile is therefore
\[
 \bigl(D\pi_{Y_t}(\partial_tY_t)\bigr)^{\perp_{\gamma_t}}
 =-4\bigl(\nabla_{L^2}\El(\gamma_t)\bigr)\circ u_0,
\]
where $\perp_{\gamma_t}$ denotes orthogonal projection in
$T_{\gamma_t(u_0(x))}\Sph^2$ onto the line perpendicular to
$D\gamma_t(T_{u_0(x)}\Sph^1)$.
The differential $D\pi$ annihilates the fibre direction, maps the horizontal
surface-tangent line onto the tangent line of $\gamma_t$, and maps the
surface-normal line in $T\Sph^3$ isomorphically onto the profile-normal line.
Consequently
\[
 D\pi_{Y_t}\bigl((\partial_tY_t)^\perp\bigr)
 =\bigl(D\pi_{Y_t}(\partial_tY_t)\bigr)^{\perp_{\gamma_t}}
 =D\pi_{Y_t}(-G_{Y_t}),
\]
where the last equality is \eqref{eq:projected-gradient}, applied with
$Q=J_t\circ q_0$ and using \eqref{eq:Jt-covers-identity}.
Injectivity on the surface-normal line gives
\begin{equation}
 (\partial_tY_t)^\perp=-G_{Y_t}.
 \label{eq:geometric-Willmore-family}
\end{equation}

Let $Z_t$ be the unique tangent vector field on $\Sigma$ determined by
\begin{equation}
 DY_t(Z_t)=-(\partial_tY_t)^\top,
\end{equation}
and let $\chi_t\in\Diff(\Sigma)$ solve
$\partial_t\chi_t=Z_t\circ\chi_t$, $\chi_0=\id$.  Compactness of $\Sigma$
gives this flow throughout $[0,T]$.  Set
\begin{equation}
 \widetilde F_t:=Y_t\circ\chi_t
 \quad\text{satisfies}\quad
 \partial_t\widetilde F_t=-G_{\widetilde F_t},
 \qquad \widetilde F_0=F_0.
\end{equation}
Indeed, the chain rule, the defining equation for $Z_t$, and naturality of
the Willmore gradient give the complete calculation
\[
 \partial_t(Y_t\circ\chi_t)
 =\bigl(\partial_tY_t+DY_t(Z_t)\bigr)\circ\chi_t
 =-G_{Y_t}\circ\chi_t
 =-G_{Y_t\circ\chi_t}.
\]

Let $T_{\max}$ be the maximal existence time of the normal Willmore solution
$F_t$ from the theorem, and retain the arbitrary $T<\infty$ above.  By
Lemma~\ref{lem:sphere-uniqueness},
\begin{equation}
 F_t=\widetilde F_t
 \quad\text{for }0\leq t<\min\{T,T_{\max}\}.
 \label{eq:constructed-equals-maximal}
\end{equation}
If $T_{\max}<\infty$, choose $T>T_{\max}$.  The family $\widetilde F_t$ is a
smooth normal solution on all of $[0,T]$ and, by
\eqref{eq:constructed-equals-maximal}, agrees with $F_t$ before
$T_{\max}$.  It is therefore a smooth extension of the maximal solution
through $T_{\max}$, a contradiction.  Hence $T_{\max}=\infty$.  Since $T$
was arbitrary, \eqref{eq:constructed-equals-maximal} also proves
$F_t=\widetilde F_t$ for every $t\geq0$.  Formula
\eqref{eq:time-slice-normalization} follows with
\begin{equation}
 Q_t:=J_t\circ q_0\circ\chi_t.
\end{equation}
Finally, give $F_t$ the parametrized Hopf data
$u_t:=p_{\gamma_t}\circ Q_t$ and $\gamma_t$.  Since a point of $M_{\gamma_t}$
is precisely a pair $(b,q)$, its canonical normalizing map from
\eqref{eq:normalizing-cover} is
\begin{equation}
 q_{F_t}(x)=\bigl(u_t(x),F_t(x)\bigr)=Q_t(x).
 \label{eq:time-slice-canonical-cover}
\end{equation}
It is therefore one-sheeted, and the exact count
\eqref{eq:normalization-count} gives
\eqref{eq:finite-time-multiplicity-main}.
\end{proof}

We now isolate precisely what is imported from the curve-flow convergence
theory.

\begin{proposition}[Full elastic-profile convergence]
\label{prop:profile-full}
Under the hypotheses of Theorem~\ref{thm:main}, the standard-clock profile
$\bar\gamma_\tau$ in \eqref{eq:standard-elastic-clock} exists smoothly for
all $\tau\geq0$.  There are orientation-preserving diffeomorphisms
$\sigma_\tau\in\Diff(\Sph^1)$ and a smooth regular critical point
$\gamma_\infty$ of $\El$ such that
\begin{equation}
 \label{eq:profile-full}
 \bar\gamma_\tau\circ\sigma_\tau
 \longrightarrow\gamma_\infty
 \quad\text{in }C^k(\Sph^1,\R^3)
 \quad\text{for every }k\in\mathbb N_0.
\end{equation}
No simplicity conclusion is asserted for $\gamma_\infty$.
\end{proposition}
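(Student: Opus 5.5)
The plan is to import the two cited curve-flow theorems and adjust for conventions; no new analysis should be needed.

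\emph{Step 1: global existence and subconvergence.} By \eqref{eq:curve-energy-conventions}, $\El=2E_{1/2}$, and the flow \eqref{eq:standard-profile-Cauchy} is the flow of \cite{DallAcquaEtAl2018} with $\lambda=1/2$ after the time change $\bar\gamma_\tau=\gamma^{\mathrm{DA}}_{2\tau}$. The initial profile $\gamma_0$ is a smooth regular closed curve, supplied by Proposition~\ref{prop:normalization}. As already recorded in the proof of Proposition~\ref{prop:parametrized-reduction}, \cite[Theorem~1.1(i)]{DallAcquaEtAl2018} then gives smooth global existence and regularity for all $\tau\geq0$. I would then invoke the subconvergence part of the same theorem. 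For every sequence $\tau_n\to\infty$ there are a subsequence, reparametrizations, and possibly isometries, and a smooth critical point of $E_{1/2}$, hence of $\El$, to which the reparametrized curves converge smoothly.

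\emph{Step 2: remove the ambient isometries.} In $\Sph^2$ the curves are confined to a compact ambient space, so no translation normalization is needed. If rotations appear in the cited statement, I would absorb them as follows. Either the theorem is stated without them for $\Sph^2$ (compactness of $\Sph^2$), or one passes to a further subsequence of rotations converging in $SO(3)$.

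\emph{Step 3: full convergence.} I would apply Pozzetta's theorem \cite{PozzettaElastic}. Whenever a flow of this type in $\Sph^2$ (an elastic flow with positive length penalization) smoothly subconverges along some sequence, modulo reparametrization, to a critical point $\gamma_\infty$, a \L ojasiewicz--Simon inequality at $\gamma_\infty$ shows that the whole flow converges. Concretely, there are diffeomorphisms $\sigma_\tau$ with $\bar\gamma_\tau\circ\sigma_\tau\to\gamma_\infty$ in every $C^k$.

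Two points need checking here. First, Pozzetta's normalization of the energy and time variable differs from ours only by constant factors, so it leaves both the orbit and the limit unchanged. Second, the reparametrizations can be chosen orientation-preserving: the flow preserves orientation, and smooth convergence of a regular curve combined with continuity in $\tau$ forbids orientation flips. If necessary, I would replace $\sigma_\tau$ by its composition with a fixed reflection and reparametrize $\gamma_\infty$ accordingly. Regularity of $\gamma_\infty$ is part of the smooth convergence statement, since the length stays bounded below and $|\partial\gamma_\infty|>0$ in the limit parametrization, for example constant speed.

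\emph{Main obstacle.} I expect the hardest part to be bookkeeping rather than analysis. The task is to verify that the precise hypotheses of \cite{PozzettaElastic} are met in the form the subconvergence result of \cite{DallAcquaEtAl2018} delivers: the type of parametrization, the possible rotations, and the time scalings. I would first check whether Pozzetta states the $\Sph^2$ case with $\lambda>0$ directly. If not, I would reduce to the general Riemannian-manifold version, where the compactness of $\Sph^2$ supplies the required bounded-image hypothesis. The final remark, that no simplicity is asserted, needs no proof.
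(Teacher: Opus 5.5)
Your proposal is correct and follows essentially the same route as the paper: global existence and subconvergence come from \cite[Theorem~1.1]{DallAcquaEtAl2018} under the time change $\bar\gamma_\tau=\gamma^{\mathrm{DA}}_{2\tau}$ with $\El=2E_{1/2}$. Full convergence then comes from Pozzetta's \L ojasiewicz--Simon theorem with trivial ambient isometries and the $\lambda=1/2$ length term, and orientation is handled by the constant-speed representatives or a fixed reflection. The one hypothesis you leave implicit, which the paper checks explicitly for \cite[Theorem~4.5]{PozzettaElastic}, is the real analyticity of $\Sph^2$, together with $p=2$, which is why no nonvanishing-curvature condition is needed.
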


\begin{proof}
Proposition~\ref{prop:parametrized-reduction} gives globality and identifies
the standard-clock profile.  The hypotheses of
\cite[Theorem~1.1(ii)]{DallAcquaEtAl2018} are the same ones verified in the
proof of that proposition: the target is $\Sph^2$, the initial curve is smooth,
closed, and regular, and the length coefficient is $1/2>0$.  The theorem gives
a sequence $\rho_j\to\infty$ for which the corresponding profiles converge
smoothly after constant-speed reparametrization.  Under
$\bar\gamma_\tau=\gamma^{\mathrm{DA}}_{2\tau}$ this is a smooth subsequence of the standard-clock flow.

The target $\Sph^2$ is compact (hence complete) and real
analytic with analytic metric; the energy has exponent $p=2$; the trajectory
is a global smooth regular flow; the ambient isometries in \cite[Theorem~4.5]{PozzettaElastic} may be taken to be the identity; and the required smooth critical subsequential limit
is the one just supplied by Dall'Acqua et al.  In the case $p=2$, no additional nowhere-vanishing curvature hypothesis is imposed.  Hence
\cite[Theorem~4.5]{PozzettaElastic} promotes this
subconvergence to \eqref{eq:profile-full}.  Pozzetta's baseline $p=2$
functional is
$\int(1+\frac12|\kappa|^2)\dd s$; more generally,
\cite[Remark~1.4]{PozzettaElastic} permits
$\int(\lambda+\frac12|\kappa|^2)\dd s$ for every $\lambda>0$.  At
$\lambda=1/2$ this is exactly $E_{1/2}$ in
\eqref{eq:curve-energy-conventions}, while $\El=2E_{1/2}$.  Multiplication by
the positive constant $2$ leaves the critical set and unparametrized flow
orbits unchanged and only produces the already-recorded constant change of
time.
The reparametrizations in the proof are the constant-speed representatives
with the orientation inherited from the initial parameter, and hence may be
chosen orientation preserving.  Alternatively, a fixed reflection of
$\Sph^1$ can be absorbed once and for all into $\gamma_\infty$.  The
examples in \cite{JakobMultiplicity} show that simplicity may be lost at
infinity.
\end{proof}

\begin{remark}[Scope]
The all-energy statement is a theorem inside the invariant Hopf class.  Its
proof uses reduction to a one-dimensional elastic flow and therefore does not
assert all-energy convergence for arbitrary Willmore flows of arbitrary
closed surfaces.
\end{remark}

\section{\L ojasiewicz--Simon length and decay}
\label{sec:LS}

Set
\begin{equation}
 \label{eq:elastic-gap}
 \varepsilon(\tau):=
 \El(\bar\gamma_\tau)-\El(\gamma_\infty),
 \qquad
 g(\tau):=\|\nabla_{L^2}\El(\bar\gamma_\tau)\|_{L^2(\dd s)}.
\end{equation}
The energy identity in standard elastic time is
\begin{equation}
 \label{eq:elastic-dissipation}
 \varepsilon'(\tau)=-g(\tau)^2.
\end{equation}

\begin{lemma}[Quantitative gradient tail]
\label{lem:LS-tail}
There are $\tau_0\geq0$, $C<\infty$, and
$\theta\in(0,\frac12]$ such that
\begin{equation}
 \label{eq:LS-inequality}
 \varepsilon(\tau)^{1-\theta}\leq Cg(\tau)
 \qquad(\tau\geq\tau_0)
\end{equation}
and
\begin{equation}
 \label{eq:elastic-tail}
 \int_\tau^\infty g(s)\dd s
 \leq C\varepsilon(\tau)^\theta
 \qquad(\tau\geq\tau_0).
\end{equation}
Furthermore,
\begin{equation}
 \label{eq:elastic-rate}
 \varepsilon(\tau)\leq
 \begin{cases}
  Ce^{-c\tau},&\theta=\frac12,\\[1mm]
  C(1+\tau)^{-1/(1-2\theta)},&0<\theta<\frac12.
 \end{cases}
\end{equation}
\end{lemma}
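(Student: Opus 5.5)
The plan is to obtain \eqref{eq:LS-inequality} from a \L ojasiewicz--Simon gradient inequality for $\El$ at the critical point $\gamma_\infty$, and then derive \eqref{eq:elastic-tail} and \eqref{eq:elastic-rate} from the dissipation identity \eqref{eq:elastic-dissipation} by the standard ODE arguments.

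\emph{Step 1: the inequality at $\gamma_\infty$.} I would import the \L ojasiewicz--Simon inequality proved in \cite{PozzettaElastic} for the $p=2$ functional $\int(\lambda+\frac12|\kappa|^2)\dd s$ on curves in the analytic manifold $\Sph^2$, at $\lambda=1/2$. It provides $\sigma>0$, $C_0$, and $\theta\in(0,\frac12]$ such that
\[
 |\El(\gamma)-\El(\gamma_\infty)|^{1-\theta}\le C_0\|\nabla_{L^2}\El(\gamma)\|_{L^2(\dd s)}
\]
for every $\gamma$ that is normal-graph-close to $\gamma_\infty$ in a suitable $H^4$- or $C^{4,\alpha}$-norm, with $\|\cdot\|<\sigma$. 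Passing from $E_{1/2}$ to $\El=2E_{1/2}$ only rescales constants. The key observation is that both sides are invariant under reparametrization. By Proposition~\ref{prop:profile-full}, $\bar\gamma_\tau\circ\sigma_\tau\to\gamma_\infty$ in every $C^k$. So for $\tau\ge\tau_0$ each $\bar\gamma_\tau\circ\sigma_\tau$ lies in the admissible neighbourhood, possibly after one further small reparametrization, which writes it as a normal graph over $\gamma_\infty$; for $C^k$-close curves this follows from the implicit function theorem. This gives \eqref{eq:LS-inequality} for all $\tau\ge\tau_0$, not only along a subsequence.

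The main obstacle is exactly this step: matching the precise hypotheses and norms of Pozzetta's inequality, including the graph parametrization and the handling of the length term. Full smooth convergence removes any trapping argument, so I would cite the inequality in the form used in the proof of \cite[Theorem~4.5]{PozzettaElastic}.

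\emph{Step 2: the tail bound \eqref{eq:elastic-tail}.} Note that $\varepsilon\ge0$ and $\varepsilon$ is nonincreasing, since $\El(\bar\gamma_\tau)\downarrow\El(\gamma_\infty)$ by smooth convergence. If $\varepsilon(\tau_1)=0$ for some $\tau_1$, then $g\equiv0$ afterwards, and both sides vanish there. Otherwise, for $\tau\ge\tau_0$,
\[
 -\frac{\dd}{\dd\tau}\varepsilon^\theta=\theta\varepsilon^{\theta-1}g^2\ge \frac{\theta}{C}g .
\]
Integrating from $\tau$ to $\infty$ and using $\varepsilon(\infty)=0$ gives $\int_\tau^\infty g\le (C/\theta)\varepsilon(\tau)^\theta$.

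\emph{Step 3: the rates \eqref{eq:elastic-rate}.} From the same relation, $\varepsilon'=-g^2\le -C^{-2}\varepsilon^{2-2\theta}$. When $\theta=\frac12$ this reads $\varepsilon'\le-c\varepsilon$, which gives exponential decay by Gronwall. When $\theta<\frac12$ it reads $(\varepsilon^{-(1-2\theta)})'\ge(1-2\theta)C^{-2}$. Integrating gives $\varepsilon(\tau)\le C(1+\tau)^{-1/(1-2\theta)}$, after enlarging $C$ to cover the bounded interval $[0,\tau_0]$.
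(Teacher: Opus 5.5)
Your proposal is correct and follows the paper's proof essentially step for step. You import Pozzetta's \L ojasiewicz--Simon inequality at $\gamma_\infty$, with length coefficient $\lambda=1/2$ and the factor $2$ between $\El$ and $E_{1/2}$. You then use full smooth convergence and reparametrization invariance to apply it for all $\tau\ge\tau_0$, and run the same ODE arguments for the tail bound and the decay rates.

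The only cosmetic difference lies in how the curve is represented near $\gamma_\infty$. You write it as a normal graph over $\gamma_\infty$ and cite the inequality directly in terms of the geometric gradient. The paper instead works in exponential coordinates $\exp_{\gamma_\infty}v_\tau$ and explicitly bounds the coordinate gradient $\rho_\tau(\mathrm d\exp_{\gamma_\infty}|_{v_\tau})^*\nabla_{L^2}\El$ by the geometric one.
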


\begin{proof}
By Proposition~\ref{prop:profile-full}, the reparametrized curve
$\widehat\gamma_\tau:=\bar\gamma_\tau\circ\sigma_\tau$ converges smoothly
to $\gamma_\infty$.  For large $\tau$ it has the exponential-coordinate
representation $\widehat\gamma_\tau=\exp_{\gamma_\infty}v_\tau$, with
$v_\tau\to0$ in $W^{4,2}$.  Corollary~3.27 of
\cite{PozzettaElastic} supplies the \L ojasiewicz--Simon gradient 
inequality in these coordinates.  
The passage to the geometric gradient is bounded:
with $\rho_\tau:=\dd s_{\widehat\gamma_\tau}/\dd s_{\gamma_\infty}$, the
coordinate gradient relative to $L^2(\dd s_{\gamma_\infty})$ is
\[
 \rho_\tau\,(\mathrm d\exp_{\gamma_\infty}|_{v_\tau})^*
           \nabla_{L^2}\El(\widehat\gamma_\tau).
\]
Smooth convergence and regularity bound $\rho_\tau$ above and away from
zero and bound the displayed differential uniformly.  Its $L^2$-norm is
therefore at most
$C\|\nabla_{L^2}\El(\widehat\gamma_\tau)\|_{L^2(\dd s)}$.
Remark~1.4 of the same paper permits the positive length coefficient in
$E_{1/2}$, and multiplication by $2$ gives the inequality for $\El$.
Finally, both the energy and the geometric gradient norm are invariant under
reparametrization.  These observations give \eqref{eq:LS-inequality}.

If $\varepsilon$ vanishes at a finite time, monotonicity and
$\varepsilon(\tau)\to0$ imply that it vanishes thereafter; the energy
identity then gives $g=0$ on that final interval.  Otherwise,
\eqref{eq:elastic-dissipation} and \eqref{eq:LS-inequality} imply
\begin{equation}
 \label{eq:LS-length-computation}
 -\frac{\dd}{\dd\tau}\varepsilon(\tau)^\theta
 =\theta\varepsilon(\tau)^{\theta-1}g(\tau)^2
 \geq c g(\tau).
\end{equation}
Integration from $\tau$ to infinity proves \eqref{eq:elastic-tail}.
Likewise,
\[
 \varepsilon'(\tau)=-g(\tau)^2
 \leq-c\varepsilon(\tau)^{2(1-\theta)}.
\]
Solving this scalar differential inequality gives exponential decay when
$\theta=\frac12$ and the stated polynomial decay otherwise.
\end{proof}

\section{Transfer of finite length to the surface}
\label{sec:surface-length}

\begin{proposition}[Finite Willmore metric length]
\label{prop:surface-length}
The trajectory $F_t$ has finite normal $L^2$-metric length.  For all
$t\geq t_0:=\tau_0/4$,
\begin{equation}
 \label{eq:exact-length-transfer}
 L_F(t)
 =\frac{\sqrt\pi}{2}
 \int_{4t}^\infty g(\tau)\dd\tau
 \leq C\bigl(\Will(F_t)-\Will_\infty\bigr)^\theta.
\end{equation}
The energy estimates \eqref{eq:energy-rates-main} hold.
\end{proposition}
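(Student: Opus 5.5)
The proof is a direct change of variables built on the speed identity \eqref{eq:speed-clock} of Proposition~\ref{prop:dictionary}. By Proposition~\ref{prop:parametrized-reduction}, every time slice has the form $F_t=X_{\gamma_t}\circ Q_t$ with $\gamma_t=\bar\gamma_{4t}$, and $F_t$ is a normal solution. Hence $\partial_tF_t=-G_{F_t}$, and \eqref{eq:speed-clock} gives pointwise in $t$
\[
 \|\partial_tF_t\|_{L^2(\mu_t)}=2\sqrt\pi\,g(4t).
\]
Integrating from $t$ to $\infty$ and substituting $\tau=4s$, so that $\dd s=\dd\tau/4$, yields
\[
 L_F(t)=2\sqrt\pi\int_t^\infty g(4s)\dd s=\frac{\sqrt\pi}{2}\int_{4t}^\infty g(\tau)\dd\tau .
\]
This is the equality in \eqref{eq:exact-length-transfer}. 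It holds in $[0,\infty]$ for every $t\ge0$, and finiteness follows from the next step.

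For $t\ge t_0=\tau_0/4$ we have $4t\ge\tau_0$, so Lemma~\ref{lem:LS-tail}, specifically \eqref{eq:elastic-tail}, bounds the integral by $C\varepsilon(4t)^\theta$. The energy gap must then be translated. By \eqref{eq:W=piE}, $\Will(F_t)=\pi\El(\gamma_t)=\pi\El(\bar\gamma_{4t})$. Smooth convergence \eqref{eq:smooth-main}, or directly the smooth profile convergence of Proposition~\ref{prop:profile-full} together with reparametrization invariance of $\El$, gives
\[
 \Will_\infty=\lim_{t\to\infty}\Will(F_t)=\pi\El(\gamma_\infty).
\]
Therefore $\Will(F_t)-\Will_\infty=\pi\varepsilon(4t)$. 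Absorbing $\pi^{-\theta}\sqrt\pi/2$ into $C$ gives the inequality in \eqref{eq:exact-length-transfer}. Finiteness of the full length $L_F(0)$ follows because $\int_0^{t_0}\|\partial_tF_t\|\dd t$ is finite: on a compact time interval the integrand is continuous, or alternatively one can use Cauchy--Schwarz with \eqref{eq:W-dissipation} and the bound $\int_0^{t_0}\|\partial_t F\|^2\le W_0$.

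For the rates \eqref{eq:energy-rates-main}, I would use $e(t)=\pi\varepsilon(4t)$ and insert \eqref{eq:elastic-rate} at $\tau=4t$. In the case $\theta=\frac12$ this gives $Ce^{-4ct}$. Otherwise it gives $C(1+4t)^{-1/(1-2\theta)}\le C(1+t)^{-1/(1-2\theta)}$, valid for $t\ge t_0$ after renaming the constants.

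The only delicate point is the identification of $\Will_\infty$ with $\pi\El(\gamma_\infty)$, which makes the surface gap exactly a multiple of the elastic gap. This requires the limit energy of the profile flow to equal the energy of its full limit, which Proposition~\ref{prop:profile-full} supplies since smooth convergence passes to $\El$. Everything else is bookkeeping of the factor $4$ in the time change.
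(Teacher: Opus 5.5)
Your proposal is correct and follows the paper's route: the speed identity \eqref{eq:speed-clock} with the substitution $\tau=4s$, the tail bound \eqref{eq:elastic-tail} of Lemma~\ref{lem:LS-tail}, the gap identity $\Will(F_t)-\Will_\infty=\pi\varepsilon(4t)$, finiteness of the length on $[0,t_0]$, and the elastic rates \eqref{eq:elastic-rate} evaluated at $\tau=4t$. Drawing $\Will_\infty=\pi\El(\gamma_\infty)$ from Proposition~\ref{prop:profile-full} rather than from \eqref{eq:smooth-main} is a sensible choice, since it uses only an earlier result.
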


\begin{proof}
Formula \eqref{eq:speed-clock} and the substitution $\tau=4s$ give
\[
 \int_t^T\|\partial_sF_s\|_{L^2(\mu_s)}\dd s
 =\frac{\sqrt\pi}{2}\int_{4t}^{4T}g(\tau)\dd\tau.
\]
Recalling \eqref{eq:tail-main}, letting $T\to\infty$ and applying Lemma~\ref{lem:LS-tail} proves the equality and inequality 
in \eqref{eq:exact-length-transfer}, because
\begin{equation}
 \label{eq:gap-transfer}
 \Will(F_t)-\Will_\infty=\pi\varepsilon(4t).
\end{equation}
The length on $[0,t_0]$ is finite by smoothness.  Finally,
\eqref{eq:energy-rates-main} follows from
\eqref{eq:elastic-rate}, \eqref{eq:gap-transfer}, and the constant time
change.
\end{proof}

This is the bridge from the one-dimensional convergence theorem to a unique
unreparametrized measure endpoint, just as claimed in  
\eqref{eq:varifold-main}.  Mere square-integrability of the speed,
which follows directly from \eqref{eq:W-dissipation}, would not suffice on an
infinite time interval.  However, the \L ojasiewicz--Simon gradient inequality supplies the required $L^1$-in-time-length estimate.

\section{Transport and the full varifold endpoint}
\label{sec:transport}

This step is independent of the Hopf symmetry once finite metric length is
known.  Let $F_t\colon\Sigma\to\Sph^3$ be any smooth normal family with
$\sup_t\Will(F_t)\leq W_0$ and define
\begin{equation}
 \label{eq:transport-measures}
 \nu_t:=\mu_t\quad\text{on }\Sigma,
 \qquad
 \lambda_t:=(F_t)_\#\mu_t\quad\text{on }\Sph^3.
\end{equation}
For a finite signed measure $\rho$ on a compact space $K$, write
\begin{equation}
 \label{eq:TV-definition}
 \|\rho\|_{\TV}:=
 \sup_{\substack{\psi\in C(K)\\\|\psi\|_\infty\leq1}}
 \left|\int\psi\dd\rho\right|.
\end{equation}
For Radon measures on $\Sph^3$, set
\begin{equation}
 \label{eq:BL-definition}
 d_{\BL}(\alpha,\beta):=
 \sup_{\substack{\varphi\in C^1(\Sph^3)\\
 \|\varphi\|_\infty+\|\nabla\varphi\|_\infty\leq1}}
 \left|\int\varphi\dd\alpha-\int\varphi\dd\beta\right|.
\end{equation}

\begin{proposition}[Infinite-time transport criterion]
\label{prop:transport}
Suppose
\begin{equation}
 \label{eq:abstract-finite-length}
 \Lambda_F(t):=\int_t^\infty
 \|\partial_sF_s\|_{L^2(\mu_s)}\dd s<\infty.
\end{equation}
Then there are Radon measures $\nu_\infty$ and $\lambda_\infty$ such that
\begin{align}
 \|\nu_\infty-\nu_t\|_{\TV}
 &\leq2\sqrt{W_0}\,\Lambda_F(t),
 \label{eq:TV-estimate}\\
 d_{\BL}(\lambda_\infty,\lambda_t)
 &\leq2\sqrt{W_0}\,\Lambda_F(t).
 \label{eq:BL-estimate}
\end{align}
If $\Var_t$ denotes the integral two-varifold in $\R^4$ induced by $F_t$,
then $\Var_t$ converges along the full trajectory to a unique integral
rectifiable varifold $\Var_\infty$, and
$\|\Var_\infty\|=\lambda_\infty$.
\end{proposition}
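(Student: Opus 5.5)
The plan is to differentiate the measures along the flow and integrate in time. For a normal family, the first variation of area gives the pointwise identity
\[
 \partial_t\,\dd\mu_t=-\langle H_{F_t},\partial_tF_t\rangle\dd\mu_t .
\]
Here $H$ is the spherical trace mean curvature vector; the Euclidean part $-2F$ is orthogonal to the velocity because $F_t$ is sphere-valued. Fix $\psi\in C(\Sigma)$ with $\|\psi\|_\infty\le1$. By Cauchy--Schwarz,
\[
 \Bigl|\frac{\dd}{\dd t}\int\psi\dd\nu_t\Bigr|\le\|H_{F_t}\|_{L^2(\mu_t)}\|\partial_tF_t\|_{L^2(\mu_t)}\le2\sqrt{W_0}\,\|\partial_tF_t\|_{L^2(\mu_t)},
\]
since $\frac14\int|H|^2\dd\mu\le\Will\le W_0$. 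Integrating over $[t,t']$ and taking the supremum over $\psi$ yields $\|\nu_{t'}-\nu_t\|_{\TV}\le2\sqrt{W_0}(\Lambda_F(t)-\Lambda_F(t'))$. Hence $(\nu_t)$ is Cauchy in the Banach space of finite signed measures with the total-variation norm. Its limit $\nu_\infty$ is a nonnegative Radon measure, and letting $t'\to\infty$ gives \eqref{eq:TV-estimate}.

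For the image measures, take a test function $\varphi$ in the unit ball of \eqref{eq:BL-definition}. Then
\[
 \frac{\dd}{\dd t}\int\varphi\dd\lambda_t=\int_\Sigma\bigl(\langle\nabla\varphi(F_t),\partial_tF_t\rangle-\varphi(F_t)\langle H,\partial_tF_t\rangle\bigr)\dd\mu_t .
\]
Bounding this by Cauchy--Schwarz gives
\[
 \|\nabla\varphi\|_\infty\,\mu_t(\Sigma)^{1/2}\|\partial_tF_t\|_2+\|\varphi\|_\infty\,2\sqrt{W_0}\,\|\partial_tF_t\|_2 .
\]
We have $\mu_t(\Sigma)\le\Will(F_t)\le W_0$, and $2\sqrt{W_0}\ge\sqrt{W_0}$. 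Combined with $\|\varphi\|_\infty+\|\nabla\varphi\|_\infty\le1$, this gives a derivative bound of $2\sqrt{W_0}\|\partial_tF_t\|_2$. Thus $(\lambda_t)$ is Cauchy for $d_{\BL}$. The masses $\lambda_t(\Sph^3)\le W_0$ are uniformly bounded on the compact space $\Sph^3$, so weak-$*$ compactness produces a limit $\lambda_\infty$. A uniformly bounded Cauchy family in $d_{\BL}$ has at most one weak-$*$ limit, because $C^1$ functions are dense in $C(\Sph^3)$ and the masses are bounded. Therefore the full family converges weakly and $d_{\BL}(\lambda_\infty,\lambda_t)\le2\sqrt{W_0}\Lambda_F(t)$.

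For the varifolds, I would use Allard's integral compactness theorem. The masses satisfy $\|\Var_t\|(\R^4)\le W_0$, and the first variation of $\Var_t$ as a varifold in $\R^4$ is $-H^{\R^4}_{F_t}\mu_t$ pushed forward. Its total variation is bounded by $\int|H^{\R^4}|\dd\mu_t\le(\mu_t(\Sigma)\int|H^{\R^4}|^2\dd\mu_t)^{1/2}\le2W_0$, using $\frac14\int|H^{\R^4}|^2\dd\mu=\Will$ from the proof of Lemma~\ref{lem:sphere-uniqueness}. The integral compactness theorem then says every sequence $t_j\to\infty$ has a subsequence converging to an integral rectifiable varifold $V$. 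Weight measures converge under varifold convergence, so $\|V\|=\lambda_\infty$.

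The main obstacle is showing that all subsequential limits coincide, since weight measures do not determine varifolds in general. My first attempt would use that integral varifolds with first variation absolutely continuous with respect to their weight (here $L^2$ mean curvature, by lower semicontinuity) have a tangent plane determined $\|V\|$-a.e. by the weight: the approximate tangent plane of the rectifiable measure $\lambda_\infty$. The density is then $\Theta^2(\lambda_\infty,\cdot)$. Indeed, an integral varifold is $\Theta\,\mathcal H^2\llcorner M$ with $\Var$ supported on the approximate tangent planes of $M$, so it is uniquely determined by its weight measure $\|V\|$. Hence every subsequential limit equals the integral varifold associated with $\lambda_\infty$, and the full trajectory converges to $\Var_\infty$ with $\|\Var_\infty\|=\lambda_\infty$.
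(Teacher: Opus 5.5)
Your proposal is correct and follows the paper's proof essentially step for step: the same first-variation-of-area Cauchy estimates in the total-variation and bounded-Lipschitz norms with the same constant $2\sqrt{W_0}$, then Allard--Simon integral compactness from the uniform bound $|\delta\Var_t|\leq 2W_0$, and uniqueness of the subsequential limits because an integral rectifiable varifold is determined by its weight. The detour through absolute continuity of the first variation is not needed; your subsequent description of $\Var$ as $\Theta\,\mathcal H^2\llcorner M$ carried on the approximate tangent planes of $M$ is exactly the paper's uniqueness argument.
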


\begin{proof}
For reference, the induced varifold is the Radon measure on
$\R^4\times\Gr(2,4)$, where $\Gr(2,4)$ is the Grassmannian of unoriented
two-planes in $\R^4$, defined by
\begin{equation}
 \label{eq:induced-varifold-definition}
 \Var_F(\Phi):=\int_\Sigma
 \Phi\bigl(F(x),DF_x(T_x\Sigma)\bigr)\dd\mu_F(x),
\qquad \Phi\in C_c(\R^4\times\Gr(2,4)).
\end{equation}
Its weight measure is $\|\Var_F\|=F_\#\mu_F$.
The first variation of area for a normal family is
\begin{equation}
 \label{eq:area-evolution-transport}
 \partial_t\dd\mu_t
 =-\langle H_t,\partial_tF_t\rangle\dd\mu_t.
\end{equation}
Because
\begin{equation}
 \label{eq:spherical-controls}
 \Area(F_t)\leq W_0,
 \qquad
 \int_\Sigma|H_t|^2\dd\mu_t
 =4(\Will(F_t)-\Area(F_t))\leq4W_0,
\end{equation}
integration of \eqref{eq:area-evolution-transport} gives, for $u>t$,
\begin{equation}
 \label{eq:TV-Cauchy}
 \|\nu_u-\nu_t\|_{\TV}
 \leq2\sqrt{W_0}\int_t^u
 \|\partial_sF_s\|_{L^2(\mu_s)}\dd s.
\end{equation}
Thus $\nu_t$ is Cauchy in the Banach space of signed Radon measures with the
total-variation norm, and \eqref{eq:TV-estimate} follows.

For $\varphi\in C^1(\Sph^3)$,
\begin{align}
 \frac{\dd}{\dd t}\int_{\Sph^3}\varphi\dd\lambda_t
 =\int_\Sigma\bigl(&\langle\nabla^{\Sph^3}\varphi(F_t),
                              \partial_tF_t\rangle\notag\\
 &-\varphi(F_t)\langle H_t,\partial_tF_t\rangle\bigr)\dd\mu_t.
 \label{eq:image-evolution}
\end{align}
Writing $a=\|\varphi\|_\infty$ and
$b=\|\nabla\varphi\|_\infty$, Cauchy--Schwarz and
\eqref{eq:spherical-controls} bound the absolute value of
\eqref{eq:image-evolution} by
\[
 (b\sqrt{W_0}+2a\sqrt{W_0})
 \|\partial_tF_t\|_{L^2(\mu_t)}
 \leq2\sqrt{W_0}(a+b)
 \|\partial_tF_t\|_{L^2(\mu_t)}.
\]
This proves the bounded-Lipschitz Cauchy estimate.  The measures $\lambda_t$
are nonnegative, supported on the compact sphere, and have mass at most
$W_0$.  Weak compactness therefore supplies a Radon subsequential limit;
the Cauchy estimate makes this the full limit and gives
\eqref{eq:BL-estimate}.  Positivity is preserved for both $\nu_\infty$ and
$\lambda_\infty$.

It remains to control tangent planes.  View $\Var_t$ as a varifold in
$\R^4$.  If $H_t^{\R^4}$ is the Euclidean mean-curvature vector, then
\begin{equation}
 \label{eq:Euclidean-H}
 \int_\Sigma|H_t^{\R^4}|^2\dd\mu_t=4\Will(F_t)\leq4W_0.
\end{equation}
Consequently the masses and first variations of $\Var_t$ are uniformly
bounded.  Indeed, for every compactly supported $C^1$ vector field
$X$ on $\R^4$ with $\|X\|_\infty\leq1$,
\begin{equation}
 |\delta\Var_t(X)|
 \leq\left(\int_\Sigma|H_t^{\R^4}|^2\dd\mu_t\right)^{1/2}
      \Area(F_t)^{1/2}
 \leq2W_0.
 \label{eq:first-variation-uniform}
\end{equation}
The integral-varifold compactness theorem
\cite[Theorem~42.7 and Remark~42.8]{SimonGMT} gives an integral rectifiable subsequential limit.  Because all supports lie in the compact sphere, bounded-Lipschitz convergence in \eqref{eq:BL-estimate} is weak convergence of the weight measures.  Every subsequential varifold limit therefore has weight $\lambda_\infty$.

Here is the precise uniqueness argument.  If an integral rectifiable
two-varifold has weight $\mu$, then
$\mu=\vartheta\mathcal H^2\llcorner M$ for a countably
two-rectifiable set $M$ and an integer-valued multiplicity $\vartheta$.
Both the approximate tangent plane $T_xM$ and the density $\vartheta(x)$ are
determined by $\mu$ for $\mu$-almost every $x$.  Thus the varifold itself,
which integrates at $(x,T_xM)$ with weight $\vartheta$, is determined by
$\mu$.  It follows that all subsequential limits above are the same
varifold $\Var_\infty$.

Finally, on the compact space
$\Sph^3\times\Gr(2,4)$ the weak topology on Radon measures with uniformly
bounded mass is metrizable.  If the full trajectory did not converge to
$\Var_\infty$, one could choose a sequence $t_j\to\infty$ for which 
the varifolds $\Var_{t_j}$ would stay a fixed positive metric distance away
from $\Var_\infty$; however by compactness we could extract a subsequence 
of $\{\Var_{t_j}\}$ converging to $\Var_\infty$, a contradiction.  
Hence the entire trajectory converges, not merely selected sequences.
\end{proof}

Applying Proposition~\ref{prop:transport} with
$\Lambda_F=L_F$ from Proposition~\ref{prop:surface-length} proves the measure
theoretic estimates and conclusions which we assert in parts (i) and (ii) of Theorem~\ref{thm:endpoint}.  We shall identify the limit varifold after constructing the smooth lifted limit.

\section{Anchored lifting and Pinkall modulus}
\label{sec:lifting}

For a smooth regular closed curve $\gamma$ of length $L$, let $C$ be an integral two-current in $\Sph^2$ with $\partial C=\gamma_\#[\Sph^1]$.  
Its oriented area is well-defined modulo $4\pi$ by
\begin{equation}
 \label{eq:area-class}
 [A(\gamma)]\in\R/(4\pi\mathbb Z),
 \qquad
 A(\gamma)=C(\vol_{\Sph^2}).
\end{equation}
Such a filling exists because $H_1(\Sph^2;\mathbb Z)=0$.  Two fillings
differ by an integral two-cycle in $\Sph^2$, hence by an integer multiple of
the fundamental current.  This definition includes self-intersecting and
multiply traversed profiles.  Identify the Lie algebra of the fibre circle
with $\R$ through $a\mapsto ia$.  The connection one-form is
$\alpha_q(v):=\langle v,iq\rangle$, and its kernel is the horizontal
space $\mathcal H_q$.  Its curvature satisfies
$\mathrm d\alpha=\pi^*\Omega$, where
\begin{equation}
 \label{eq:Hopf-curvature}
\Omega=\frac12\vol_{\Sph^2}.
\end{equation}
For the sign, at $q=1$ take horizontal vectors $j,k$.  Directly from
$\alpha=\langle\mathrm dq,iq\rangle$ one obtains
$\mathrm d\alpha(j,k)=2$.  Also $D\pi_1(j)=2k$ and
$D\pi_1(k)=-2j$, and hence 
$(\pi^*\vol_{\Sph^2})_1(j,k)=4$ on account of \eqref{eq:Hopf-curvature}, 
according to the orientation we have fixed in Section~\ref{sec:dictionary}.  Right multiplication on $\Sph^3$ preserves
$\alpha$ and induces the rotation $y\mapsto r^{-1}yr$ on $\Sph^2$;
this proves \eqref{eq:Hopf-curvature} everywhere.
Consequently, if $\eta$ is the horizontal lift of the parametrized cycle,
Stokes' formula for the bundle holonomy gives, in the convention
\eqref{eq:horizontal-holonomy},
\begin{equation}
 \label{eq:holonomy-current}
 \eta(s+L/2)=e^{-iA(\gamma)/2}\eta(s).
\end{equation}
The holonomy is independent of the filling.  If $[\Sph^2]$ denotes the
oriented fundamental current, replacing $C$ by $C+m[\Sph^2]$, with
$m\in\mathbb Z$, changes $A/2$ by $2\pi m$.  This is Pinkall's
calculation \cite[Proposition~1]{Pinkall1985}, translated to the deck
convention \eqref{eq:horizontal-holonomy}.  The opposite orientation
convention replaces $A$ by $-A$ and conjugates the oriented marking; every
formula below uses the convention fixed in
\eqref{eq:horizontal-holonomy}--\eqref{eq:holonomy-area}.  Pinkall's proof is
formulated for an arbitrary closed immersed curve and therefore also applies
to the integral cycle above.  It gives the marked lattice
\begin{equation}
 \label{eq:Pinkall-lattice}
 \Gamma_\gamma=
 \Span_{\mathbb Z}\left\{(2\pi,0),
                 \left(\frac{A(\gamma)}2,\frac{L(\gamma)}2\right)\right\}.
\end{equation}
Changing $A$ by the value $4\pi$ is equivalent to adding the first generator to the second in \eqref{eq:Pinkall-lattice} and hence leaves the lattice $\Gamma_\gamma$ invariant.

\begin{lemma}[Anchored Hopf-lift convergence]
\label{lem:anchored-lift}
Let $\gamma_j,\gamma_\infty\colon\Sph^1\to\Sph^2$ be smooth regular closed
curves such that $\gamma_j\to\gamma_\infty$ in $C^\infty$.  Let
$X_j\colon M_j:=M_{\gamma_j}\to\Sph^3$ be the standard parametrized models
\eqref{eq:standard-Hopf-model}, and write
$M_\infty:=M_{\gamma_\infty}$ and
$X_\infty:=X_{\gamma_\infty}$.  No simplicity is assumed.  Then:
\begin{enumerate}[label=\textnormal{(\alph*)},leftmargin=8mm]
 \item after choosing a continuous lift of the area class,
 $L_j\to L_\infty$ and $A_j\to A_\infty$;
 \item the marked lattices $\Gamma_j$ in \eqref{eq:Pinkall-lattice}
 converge to $\Gamma_\infty$ in the sense that their marked generator
 matrices converge in $M_2(\R)$;
 \item there are diffeomorphisms
 $K_j\colon M_\infty\to M_j$ such that
 \begin{equation}
  \label{eq:anchored-model-convergence}
  X_j\circ K_j\longrightarrow X_\infty
  \quad\text{in }C^k(M_\infty,\R^4)
  \quad\text{for every }k\in\mathbb N_0;
 \end{equation}
 \item if $\widehat\gamma=\gamma\circ\sigma$ for an orientation-preserving
 diffeomorphism $\sigma$ of $\Sph^1$, then there is a diffeomorphism
 $R_\sigma\colon M_\gamma\to M_{\widehat\gamma}$ such that
 \begin{equation}
  \label{eq:profile-reparametrization-lift}
  X_\gamma=X_{\widehat\gamma}\circ R_\sigma.
 \end{equation}
\end{enumerate}
\end{lemma}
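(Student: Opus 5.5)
The plan is to pass everything through the explicit lattice charts $I_\gamma$ of \eqref{eq:lattice-pullback-identification}, or more conveniently through the fixed-parameter charts $\mathcal I_t$ used in the proof of Proposition~\ref{prop:parametrized-reduction}. The aim is to reduce (a)--(d) to the continuous dependence of horizontal lifts, lengths and holonomies on the curve.

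For (a), $L_j=\int|\dot\gamma_j|$ converges by $C^1$ convergence. For the area I will avoid currents and use the holonomy formula \eqref{eq:holonomy-current}.
\begin{itemize}
 \item Fix anchors $q_j\in\pi^{-1}(\gamma_j(0))$ with $q_j\to q_\infty$, for instance by horizontally lifting a short geodesic from $\gamma_\infty(0)$ to $\gamma_j(0)$. This is the short-geodesic-cylinder normalization referred to in Theorem~\ref{thm:endpoint}(iv).
 \item Let $\eta_j\colon\R\to\Sph^3$ be the horizontal lift of $\gamma_j$ on the common parameter circle $\R/2\pi\mathbb Z$. It solves the linear ODE $\dot\eta=\frac12\,\dot{\gamma}\,\eta$ up to the quaternionic form of the horizontal lift. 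Note that $D\pi_q(uq)=2q^{-1}iuq$ inverts explicitly on $\mathcal H_q$, so the lift ODE has coefficients depending smoothly on $\gamma,\dot\gamma$.
 \item Smooth dependence of ODE solutions on coefficients then gives $\eta_j\to\eta_\infty$ in $C^\infty([0,2\pi])$.
 \item The holonomy phase $e^{-ih_j}=\eta_j(2\pi)\eta_j(0)^{-1}$ converges to $e^{-ih_\infty}$. Hence there is a unique real lift $h_j\to h_\infty$ for large $j$, and $A_j:=2h_j\to A_\infty$ by \eqref{eq:holonomy-current}.
 \item The result is independent of the parametrization: (d) below shows that reparametrization does not change the holonomy.
\end{itemize}
Part (b) is then immediate from the generator matrix $B$, with entries $A_j/2$ and $L_j/2$.

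For (c), I will avoid the arclength parametrizations of $\Gamma_j$, since the constant-speed reparametrizations $a_{\gamma_j}$ would have to be shown to converge too. I will use instead the fixed-parameter charts
\[
\mathcal I_j\colon\R^2/\Lambda_j\to M_j,\qquad \mathcal I_j[\varphi,s]=([s],e^{i\varphi}\eta_j(s)),
\]
with $\Lambda_j=C_j\mathbb Z^2$ and $C_j=\begin{pmatrix}2\pi&h_j\\0&2\pi\end{pmatrix}$. As in \eqref{eq:finite-time-lattice-identification}, I set
\[
K_j:=\mathcal I_j\circ[C_jC_\infty^{-1}\,\cdot\,]\circ\mathcal I_\infty^{-1}.
\]
On the universal cover one then computes
\[
X_j\circ K_j\circ q(\varphi,s)=e^{i(\varphi+(h_j-h_\infty)s/2\pi)}\eta_j(s),
\]
which converges in $C^k$ on the compact fundamental domain $[0,2\pi]^2$ to $e^{i\varphi}\eta_\infty(s)$. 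Since both sides are $\Lambda_\infty$-periodic, the convergence descends to $C^k(M_\infty,\R^4)$.

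For (d), define $R_\sigma(b,q):=(\sigma^{-1}(b),q)$. This is a diffeomorphism $M_\gamma\to M_{\widehat\gamma}$, because $\widehat\gamma(\sigma^{-1}b)=\gamma(b)=\pi(q)$, and it obviously satisfies $X_{\widehat\gamma}\circ R_\sigma=X_\gamma$. Orientation preservation guarantees that $\widehat\gamma$ and $\gamma$ have the same oriented cycle, so $A$ and $\Gamma$ are unchanged.

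The main obstacle is (a). There I must make sure the holonomy lift is chosen continuously and consistently with the anchoring, and that \eqref{eq:holonomy-current} identifies $2h_j$ with a continuous real lift of the area class. I would handle this by defining $A_j$ via $h_j$ and only afterwards invoking the mod-$4\pi$ identity. The remaining steps reduce to ODE continuity.
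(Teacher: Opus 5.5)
Your proposal is correct. Parts (b) and (d) match the paper, but (a) and (c) take a genuinely different route.

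\textbf{Part (a).} The paper fixes the real lift of the area class geometrically. It sets $A_j-A_\infty:=\int_{C_j}\vol_{\Sph^2}$, where $C_j$ is the thin cylinder swept by short geodesics from $\gamma_\infty(s)$ to $\gamma_j(s)$, and observes that this integral tends to zero. You instead take $A_j:=2h_j$ from the convergence of the holonomy phases $\eta_j(2\pi)\eta_j(0)^{-1}$. This comes for free from the lift ODE together with the already established identity $[h_\gamma]=[A(\gamma)/2]$, so your route avoids currents entirely. The paper's route produces exactly the normalization that Theorem~\ref{thm:endpoint}(iv) names.

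Your remark that the anchor choice ``is'' the short-geodesic-cylinder normalization is a misidentification: the holonomy does not depend on the anchors at all. Nothing is lost, however. Both lifts represent the same class modulo $4\pi$ and both converge to $A_\infty$, so they coincide for all large $j$.

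\textbf{Part (c).} The paper works with the speed-$2$ parametrizations and the Pinkall lattices, inducing $K_j$ from $P_j=B_jB_\infty^{-1}$. This keeps $K_j$ compatible with the marked lattices in (b), but it needs the convergence of the arclength reparametrizations $a_{\gamma_j}$. You reuse the fixed-parameter charts $\mathcal I_j$ and auxiliary lattices $\Lambda_j=C_j\mathbb Z^2$ from the proof of Proposition~\ref{prop:parametrized-reduction}. That sidesteps reparametrization altogether, and your $K_j$ covers the identity of the profile circle. Since (c) only asks for some diffeomorphisms, this is fully adequate.

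\textbf{A small slip.} With $\pi(q)=q^{-1}iq$, the horizontal-lift equation is $\dot\eta=-\tfrac12\,i\,\eta\,\dot\gamma$, not $\tfrac12\dot\gamma\eta$. It is still linear in $\eta$ with coefficient $\dot\gamma$, so your continuous-dependence argument goes through unchanged.
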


\begin{proof}
Regularity of $\gamma_\infty$ and $C^1$ convergence give, in the fixed
parameter $x$ on $\Sph^1$,
\[
 c_0:=\min_{\Sph^1}|\partial_x\gamma_\infty|>0,
 \qquad
 \min_{\Sph^1}|\partial_x\gamma_j|\geq c_0/2
 \quad\text{for all sufficiently large }j.
\]
The anchored arclength reparametrizations depend smoothly on a regular
curve.  Their inverses, extended to the universal covers, therefore converge
smoothly on compact intervals, and $L_j\to L_\infty>0$.
We use the speed-$2$ parametrizations with periods $L_j/2$ in the horizontal
lift construction below.  For the area, take the curves in their original
common parameter sufficiently close
to $\gamma_\infty$ and join $\gamma_\infty(s)$ to $\gamma_j(s)$ by the
unique short geodesic.  Orient the resulting thin cylinder $C_j$ so that
$\partial C_j=(\gamma_j)_\#[\Sph^1]-(\gamma_\infty)_\#[\Sph^1]$.  
It gives the unambiguous local lift
\begin{equation}
 \label{eq:area-lift-local}
 A_j-A_\infty:=\int_{C_j}\vol_{\Sph^2},
\end{equation}
which tends to zero by smooth convergence.  Formula
\eqref{eq:Pinkall-lattice} then proves lattice convergence.

For the lifted statement, fix a point
$q_\infty\in\pi^{-1}(\gamma_\infty(0))$ and choose anchors
$q_j\in\pi^{-1}(\gamma_j(0))$ with $q_j\to q_\infty$.  The horizontal-lift
equation is a smooth first-order ODE.  Continuous dependence, differentiated
in the curve parameter, implies $C^\infty$ convergence on every compact
subset of the universal cover of the anchored horizontal lifts $\eta_j$.
With the convention \eqref{eq:horizontal-holonomy}, their endpoint
holonomies are $e^{-iA_j/2}$.  The intrinsic maps
\[
 (\varphi,s)\longmapsto
 \bigl(a_{\gamma_j}([s]),e^{i\varphi}\eta_j(s)\bigr)\in M_j
\]
are universal coverings with deck groups $\Gamma_j$.
Let
$I_j\colon\R^2/\Gamma_j\to M_j$ and
$I_\infty\colon\R^2/\Gamma_\infty\to M_\infty$ be the diffeomorphisms
\eqref{eq:lattice-pullback-identification} determined by these choices.

More explicitly, let
\begin{equation}
 B_j:=\begin{pmatrix}2\pi&A_j/2\\0&L_j/2\end{pmatrix},
 \qquad
 B_\infty:=\begin{pmatrix}2\pi&A_\infty/2\\0&L_\infty/2\end{pmatrix},
 \qquad P_j:=B_jB_\infty^{-1}.
 \label{eq:anchored-generator-matrices}
\end{equation}
Here $\det B_\infty=\pi L_\infty>0$, so $B_\infty^{-1}$ is well defined.
Then $\Gamma_j=B_j\mathbb Z^2$, $P_j\Gamma_\infty=\Gamma_j$, and
$P_j\to I$ in $M_2(\R)$.  Therefore
\begin{equation}
 \widetilde K_j\colon\R^2/\Gamma_\infty
 \longrightarrow\R^2/\Gamma_j,
 \qquad \widetilde K_j[x]:=[P_jx],
 \label{eq:explicit-Kj}
\end{equation}
is a well-defined diffeomorphism, and
\begin{equation}
 K_j:=I_j\circ\widetilde K_j\circ I_\infty^{-1}
 \colon M_\infty\longrightarrow M_j
 \label{eq:intrinsic-Kj}
\end{equation}
is the required identification of the intrinsic pullback tori.  Write
$P_j(\varphi,s)=(\varphi_j(\varphi,s),s_j(\varphi,s))$ on the universal
cover.  The pulled-back immersion has the explicit formula
\begin{equation}
 (X_j\circ K_j\circ I_\infty)[\varphi,s]
 =e^{i\varphi_j(\varphi,s)}\eta_j\bigl(s_j(\varphi,s)\bigr).
 \label{eq:explicit-anchored-convergence}
\end{equation}
On a compact fundamental parallelogram for $\Gamma_\infty$, the arguments
$P_j(\varphi,s)$ remain in one fixed compact set.  The already established
$C^\infty_{\mathrm{loc}}$ convergence $\eta_j\to\eta_\infty$, together with
$P_j\to I$, therefore permits differentiation of
\eqref{eq:explicit-anchored-convergence} to every order and yields
\[
 e^{i\varphi_j}(\eta_j \circ s_j)
 \longrightarrow e^{i\varphi}\eta_\infty(s)
 \quad\text{in }C^\infty.
\]
The equivariance under $\Gamma_\infty$ makes this convergence well defined
on $\R^2/\Gamma_\infty$.  Since $I_\infty$ is a fixed diffeomorphism and
$(X_\infty\circ I_\infty)[\varphi,s]
=e^{i\varphi}\eta_\infty(s)$ by \eqref{eq:standard-Hopf-model}, 
it is exactly \eqref{eq:anchored-model-convergence} on $M_\infty$.

Finally, if $\widehat\gamma=\gamma\circ\sigma$, 
the intrinsic pullback model gives the explicit diffeomorphism
\begin{equation}
 R_\sigma\colon M_\gamma\longrightarrow M_{\widehat\gamma},
 \qquad R_\sigma(b,q):=(\sigma^{-1}(b),q).
 \label{eq:explicit-profile-reparametrization}
\end{equation}
It satisfies $X_\gamma=X_{\widehat\gamma}\circ R_\sigma$ by definition.
This construction is independent of injectivity of either profile and makes
no choice of any horizontal lifts.
\end{proof}

\begin{proposition}[Smooth surface and modulus convergence]
\label{prop:smooth-lift}
The profile convergence in Proposition~\ref{prop:profile-full} lifts to
\eqref{eq:smooth-main}.  The limiting immersion is Willmore, and
\eqref{eq:pinkall-main}--\eqref{eq:modulus-main} hold.  Moreover, the
varifold obtained in Proposition~\ref{prop:transport} is
$\Var_{F_\infty}$.
\end{proposition}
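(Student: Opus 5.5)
We combine three earlier results: the parametrized reduction of Proposition~\ref{prop:parametrized-reduction}, the profile convergence of Proposition~\ref{prop:profile-full}, and the anchored lifting of Lemma~\ref{lem:anchored-lift}.

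\emph{Construction of the reparametrizations.} By Proposition~\ref{prop:parametrized-reduction} we have $F_t=X_{\gamma_t}\circ Q_t$ with $\gamma_t=\bar\gamma_{4t}$. Put $\widehat\gamma_t:=\gamma_t\circ\sigma_{4t}$. Proposition~\ref{prop:profile-full} gives $\widehat\gamma_t\to\gamma_\infty$ in $C^\infty$. Lemma~\ref{lem:anchored-lift}(d) supplies $R_t:=R_{\sigma_{4t}}\colon M_{\gamma_t}\to M_{\widehat\gamma_t}$ with $X_{\gamma_t}=X_{\widehat\gamma_t}\circ R_t$.

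Lemma~\ref{lem:anchored-lift}(c) is stated for sequences, but its construction works verbatim for a continuous parameter $t$ once the anchors $q_t\to q_\infty$ are chosen. The matrices $P_t=B_tB_\infty^{-1}$ and the horizontal lifts depend only on $\widehat\gamma_t$. This yields diffeomorphisms $K_t\colon M_\infty\to M_{\widehat\gamma_t}$ with $X_{\widehat\gamma_t}\circ K_t\to X_\infty$ in every $C^k$. Fix once and for all a diffeomorphism $Q_\infty\colon\Sigma\to M_\infty$; for instance, take $t_1$ large and set $Q_\infty:=K_{t_1}^{-1}\circ R_{t_1}\circ Q_{t_1}$. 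Then define
\[
 \Psi_t:=Q_t^{-1}\circ R_t^{-1}\circ K_t\circ Q_\infty\in\Diff(\Sigma),
 \qquad F_\infty:=X_\infty\circ Q_\infty .
\]
With these choices $F_t\circ\Psi_t=X_{\widehat\gamma_t}\circ K_t\circ Q_\infty\to F_\infty$ in $C^k$, which is \eqref{eq:smooth-main}.

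\emph{Properties of the limit.} The map $F_\infty$ is a parametrized Hopf immersion with profile $\gamma_\infty$. Since $\gamma_\infty$ is critical for $\El$, identity \eqref{eq:projected-gradient} gives $D\pi(G_{F_\infty})=0$. Because $D\pi$ is injective on the normal line, $G_{F_\infty}=0$, so $F_\infty$ is Willmore. Equation \eqref{eq:limit-energy-main} follows from \eqref{eq:W=piE}, the continuity of $\El$ under smooth convergence, and the invariance of $\El$ under reparametrization.

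\emph{Lattice and modulus.} Length and the oriented area class are invariant under orientation-preserving reparametrization. Hence $L(t)=L(\widehat\gamma_t)$ and $[A(t)]=[A(\widehat\gamma_t)]$. Lemma~\ref{lem:anchored-lift}(a),(b), applied with the short-geodesic-cylinder lift on a final time interval, gives $L(t)\to L_\infty$, $A(t)\to A_\infty$, and $B(t)\to B_\infty$. The map $\mathbb H_+\to\moduli$ is continuous, so convergence of $(A(t)+iL(t))/(4\pi)$ gives \eqref{eq:modulus-main}. Some care is needed to check that the anchored lift chosen on the final interval is continuous in $t$. This holds because the thin geodesic cylinders between $\widehat\gamma_t$ and $\gamma_\infty$ vary continuously once the two curves are uniformly $C^0$-close.

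\emph{Identification of the varifold.} Varifolds are invariant under domain reparametrization, so $\Var_t=\Var_{F_t\circ\Psi_t}$. Smooth convergence $F_t\circ\Psi_t\to F_\infty$ implies convergence of the integrands in \eqref{eq:induced-varifold-definition}, uniformly, together with the area densities. Therefore $\Var_t\rightharpoonup\Var_{F_\infty}$, and uniqueness of limits in Proposition~\ref{prop:transport} gives $\Var_\infty=\Var_{F_\infty}$.

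The main obstacle is the step that extends Lemma~\ref{lem:anchored-lift} from sequences to the continuum family, including a continuous choice of anchors and of the area lift. I would handle it by running the lemma's explicit construction directly with $t$-dependent data rather than passing to subsequences. Smooth dependence of horizontal lifts on the curve then gives convergence along every sequence $t_j\to\infty$ with the same fixed $Q_\infty$.
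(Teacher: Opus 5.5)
Your proposal is correct and follows the paper's proof essentially step for step. It uses the same maps $\Psi_t=Q_t^{-1}\circ R_t^{-1}\circ K_t\circ Q_\infty$ built from Lemma~\ref{lem:anchored-lift}(c),(d), the same Willmore argument via \eqref{eq:projected-gradient} and injectivity of $D\pi$ on the normal line, the same invariance of $L$ and $[A]$ under the orientation-preserving $\sigma_{4t}$, and the same identification $\Var_\infty=\Var_{F_\infty}$ by uniqueness in Proposition~\ref{prop:transport}.

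Your concrete choice of $Q_\infty$ and your remark that the sequential lemma applies along every $t_j\to\infty$ only make explicit what the paper leaves implicit. One small correction: continuity of the area lift does not need continuity of $\sigma_\tau$, which is not given. It follows because $[A(t)]$ is continuous and the cylinder-normalized lift stays within less than $2\pi$ of $A_\infty$.
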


\begin{proof}
For $t$ sufficiently large set
\begin{equation}
 \widehat\gamma_t:=\bar\gamma_{4t}\circ\sigma_{4t}.
\end{equation}
By Proposition~\ref{prop:profile-full},
$\widehat\gamma_t\to\gamma_\infty$ in $C^\infty$.  Write
$\widehat M_t:=M_{\widehat\gamma_t}$ and
$\widehat X_t:=X_{\widehat\gamma_t}$.  The limiting profile is regular by
Proposition~\ref{prop:profile-full}, so Lemma~\ref{lem:anchored-lift}
provides diffeomorphisms
\begin{equation}
 K_t\colon M_{\gamma_\infty}\longrightarrow\widehat M_t
 \quad\text{such that}\quad
 \widehat X_t\circ K_t\longrightarrow X_{\gamma_\infty}
 \quad\text{in }C^\infty.
 \label{eq:Kt-lift}
\end{equation}
The same lemma, applied to the profile reparametrization $\sigma_{4t}$,
gives a diffeomorphism
\begin{equation}
 R_t\colon M_{\gamma_t}\longrightarrow\widehat M_t,
 \qquad X_{\gamma_t}=\widehat X_t\circ R_t.
 \label{eq:Rt-lift}
\end{equation}

Choose once and for all an orientation-preserving diffeomorphism
$Q_\infty^*\colon\Sigma\to M_{\gamma_\infty}$ and define
\begin{equation}
 F_\infty:=X_{\gamma_\infty}\circ Q_\infty^*.
\end{equation}
From Proposition~\ref{prop:parametrized-reduction} we have
$F_t=X_{\gamma_t}\circ Q_t$.  Therefore the explicit maps
\begin{equation}
 \label{eq:explicit-Psi}
 \Psi_t:=Q_t^{-1}\circ R_t^{-1}\circ K_t\circ Q_\infty^*
\end{equation}
belong to $\Diff(\Sigma)$ and satisfy
\begin{equation}
 F_t\circ\Psi_t
 =\widehat X_t\circ K_t\circ Q_\infty^*
 \longrightarrow F_\infty
 \quad\text{in }C^\infty(\Sigma,\R^4).
\end{equation}
This proves the asserted convergence in \eqref{eq:smooth-main} 
on the fixed parameter torus $\Sigma$.

Since
$\nabla_{L^2}\El(\gamma_\infty)=0$, the projected-gradient identity
\eqref{eq:projected-gradient}, together with the fact that $D\pi$ is
injective when restricted to the normal line by \eqref{eq:normal-dilation}, yields $G_{F_\infty}=0$.  Hence $F_\infty$ is
Willmore.  Smooth convergence gives \eqref{eq:limit-energy-main} and, because varifolds are invariant under domain diffeomorphisms,
\begin{equation}
 \label{eq:smooth-implies-varifold}
 \Var_t=\Var_{F_t\circ\Psi_t}
 \rightharpoonup\Var_{F_\infty}.
\end{equation}
Uniqueness in Proposition~\ref{prop:transport} identifies this limit with
$\Var_\infty$.  Finally, the first two parts of
Lemma~\ref{lem:anchored-lift} give convergence of the lattices and their
modular classes.  Since $\sigma_{4t}$ is orientation preserving, it changes
neither profile length nor oriented area class; hence these are precisely the
quantities attached to $\gamma_t$ in Theorem~\ref{thm:endpoint}\textnormal{(iv)}.
\end{proof}

\section{Multiplicity resolution}
\label{sec:multiplicity}

\begin{proposition}[Density of a Hopf immersion]
\label{prop:density}
Let $X_\gamma\colon M_\gamma\to\Sph^3$ be the standard parametrized Hopf
immersion associated with a smooth regular closed curve
$\gamma\colon\Sph^1\to\Sph^2$.  Then, for
$\mathcal H^2$-almost every $z\in X_\gamma(M_\gamma)$,
\begin{equation}
 \label{eq:density-profile}
 \Theta^2(\|\Var_{X_\gamma}\|,z)
 =\#\gamma^{-1}(\pi(z)).
\end{equation}
If $\gamma=\bar\gamma\circ p_m$, where $p_m\colon\Sph^1\to\Sph^1$ is a
smooth covering of degree $m\geq1$ and $\bar\gamma$ is embedded, then
\begin{equation}
 \label{eq:density-mcover}
 \Var_{X_\gamma}=m\,|\pi^{-1}(\bar\gamma)|.
\end{equation}
\end{proposition}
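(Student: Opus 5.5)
The plan is to compute the weight measure $\|\Var_{X_\gamma}\|=(X_\gamma)_\#\mu_{X_\gamma}$ explicitly with the area formula, and then read off the density at points where the image is locally a finite union of embedded sheets.

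\textbf{Step 1: the area formula.} Since $X_\gamma$ is an immersion, its two-dimensional Jacobian relative to $\mu_{X_\gamma}$ equals $1$. The area formula therefore gives, for every Borel set $E\subset\Sph^3$,
\[
 \|\Var_{X_\gamma}\|(E)=\int_{E}\#\bigl(X_\gamma^{-1}(z)\bigr)\dd\mathcal H^2(z).
\]
By \eqref{eq:standard-model-fibre}, which holds for the standard model with $m_F=1$, we have $\#X_\gamma^{-1}(z)=\#\gamma^{-1}(\pi(z))$ at every image point. So the weight is $\vartheta\,\mathcal H^2\llcorner X_\gamma(M_\gamma)$ with multiplicity $\vartheta(z)=\#\gamma^{-1}(\pi(z))$. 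This multiplicity is finite everywhere, because a regular closed curve has finite fibres by compactness and local injectivity.

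\textbf{Step 2: identify $\Theta^2$ with $\vartheta$ almost everywhere.} For a rectifiable weight $\vartheta\mathcal H^2\llcorner M$ with $\vartheta$ locally $\mathcal H^2$-integrable, the density equals $\vartheta$ at $\mathcal H^2$-a.e.\ point of $M$; this is a standard fact for rectifiable varifolds, cf.\ \cite{SimonGMT}. A more hands-on route is available here and I would record it as well.

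1. Call $y\in\gamma(\Sph^1)$ \emph{good} if each of its finitely many preimages $b_1,\dots,b_N$ has a neighbourhood arc, and these arcs map to embedded arcs that either coincide as sets near $y$ or meet only at $y$.
2. The bad points are where distinct branches are tangent without locally coinciding, or accumulate. I expect to show these form an $\mathcal H^1$-null set; this is the main obstacle, discussed below.
3. For $z\in\pi^{-1}(y)$ with $y$ good and the branches pairwise distinct near $y$, the image near $z$ is a union of $N$ embedded Hopf cylinders $\pi^{-1}(\text{arc}_i)$. The density is the sum of the individual densities, each equal to $1$, except on the circle $\pi^{-1}(y)$, which is $\mathcal H^2$-null.

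\textbf{Main obstacle.} The delicate point is that two branches may intersect non-transversally on a Cantor-like set, so step 2 cannot simply invoke discreteness of crossings. I would avoid this by using the general rectifiable density theorem rather than the geometric decomposition. The weight is $\vartheta\mathcal H^2\llcorner M$ with $M=X_\gamma(M_\gamma)$ a countable union of embedded $C^1$ pieces (images of small coordinate patches), and $\vartheta$ is bounded on compact pieces. Then Lebesgue differentiation with respect to $\mathcal H^2\llcorner M$, combined with the density-one property of $\mathcal H^2$ on rectifiable sets, gives $\Theta^2=\vartheta$ almost everywhere. Boundedness of $\vartheta$ follows from a uniform local injectivity radius of $\gamma$ and compactness.

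\textbf{Step 3: the $m$-fold cover.} If $\gamma=\bar\gamma\circ p_m$ with $\bar\gamma$ embedded, then $\#\gamma^{-1}(\pi(z))=m$ for every $z$ in $\pi^{-1}(\bar\gamma)$, which is an embedded torus. The tangent plane of $X_\gamma$ at each preimage equals the tangent plane of this torus at $z$. Inserting this into \eqref{eq:induced-varifold-definition} and using the area formula once more, now for the varifold rather than just its weight, gives $\Var_{X_\gamma}(\Phi)=m\int_{\pi^{-1}(\bar\gamma)}\Phi(z,T_z)\dd\mathcal H^2$. This is exactly \eqref{eq:density-mcover}.
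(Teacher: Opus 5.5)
Your proposal is correct and follows essentially the same route as the paper. Both arguments combine the area formula, the exact fibre identity $\#X_\gamma^{-1}(z)=\#\gamma^{-1}(\pi(z))$, and the standard fact that a rectifiable weight has density equal to its multiplicity almost everywhere. The one difference is that your Step~3 computes the varifold directly, whereas the paper infers the varifold identity from the density.

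Your abandoned ``hands-on'' route is not needed, and its expected $\mathcal H^1$-nullity of bad points can genuinely fail: two branches can be tangent along a fat Cantor set. The obstacle you feared is illusory, though. Separate the finitely many preimages of $z$ by disjoint embedded sheets of the compact immersion. Each sheet then contributes density exactly $1$, so $\Theta^2(\|\Var_{X_\gamma}\|,z)=\#X_\gamma^{-1}(z)$ at every image point, however the sheets intersect.
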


\begin{proof}
For every smooth proper immersion, the area formula identifies the density of
its induced integral varifold with the cardinality of the fibre of the
immersion at almost every image point.  Here
$\Theta^2(\mu,z):=\lim_{r\downarrow0}\mu(B_r(z))/(\pi r^2)$ denotes the
two-dimensional density, which exists at almost every point of the
rectifiable measure in question.  For the intrinsic pullback model
\eqref{eq:pullback-model}, one has the exact set identity
\[
 X_\gamma^{-1}(z)
 =\{(b,z)\in M_\gamma:b\in\gamma^{-1}(\pi(z))\}.
\]
Hence
$\#X_\gamma^{-1}(z)=\#\gamma^{-1}(\pi(z))$ holds at every point 
$z \in X_\gamma(M_\gamma)$.  Combining this identity with the area formula proves \eqref{eq:density-profile}.  This
argument also covers profile arcs which are retraced and therefore does not
require transverse self-intersections.  For an $m$-fold cover of an embedded
curve, the right-hand side equals $m$ almost everywhere on the underlying
Hopf torus; the equality of integral varifolds follows.
\end{proof}

\begin{proof}[Completion of the proofs of
Theorems~\ref{thm:main} and \ref{thm:endpoint}]
Proposition~\ref{prop:parametrized-reduction} proves assertion (i), including
global existence, the parametrized normalization, and the finite-time
multiplicity formula.  Proposition~\ref{prop:profile-full} gives the full
limiting profile, and Proposition~\ref{prop:smooth-lift} lifts it to the
surface convergence in Theorem~\ref{thm:main}\textnormal{(ii)}.
Lemma~\ref{lem:LS-tail} and Proposition~\ref{prop:surface-length} give
Theorem~\ref{thm:endpoint}\textnormal{(i)} and
\eqref{eq:energy-rates-main}.  Proposition~\ref{prop:transport}, followed by
Proposition~\ref{prop:smooth-lift}, gives
Theorem~\ref{thm:endpoint}\textnormal{(ii)}.  Lemma~\ref{lem:anchored-lift}
and Proposition~\ref{prop:smooth-lift} give
Theorem~\ref{thm:endpoint}\textnormal{(iv)}, while
Proposition~\ref{prop:density} proves
Theorem~\ref{thm:endpoint}\textnormal{(iii)}.
\end{proof}

\section{The multiplicity-two Clifford endpoint}
\label{sec:double-Clifford}

There are simple initial profiles with energy arbitrarily close to that of a
twice-traversed great circle whose elastic flows converge smoothly to the
double cover \cite[Theorem~2.2]{JakobMultiplicity}.  Theorems~\ref{thm:main}
and \ref{thm:endpoint} determine the entire surface endpoint, not only its
energy.

\begin{corollary}[Double-Clifford endpoint above $4\pi^2$]
\label{cor:double-Clifford}
For every $\delta>0$ there is a smooth simple initial Hopf immersion $F_0$
in the sense of \eqref{eq:ae-simple} such that
\begin{equation}
 \label{eq:sharp-initial-energy}
 4\pi^2<\Will(F_0)<4\pi^2+\delta
\end{equation}
and its classical Willmore flow has the following full endpoint:
\begin{align}
 \Will(F_t)&\downarrow4\pi^2,
 \label{eq:double-energy}\\
 F_t\circ\Psi_t&\longrightarrow F_\infty
 \quad\text{smoothly,}
 \label{eq:double-smooth}\\
 \Var_t&\rightharpoonup2|T_{\mathrm{Cl}}|,
 \qquad
 (F_t)_\#\mu_t\rightharpoonup
 2\mathcal H^2\llcorner T_{\mathrm{Cl}},
 \label{eq:double-varifold}
\end{align}
where $T_{\mathrm{Cl}}$ is a Clifford torus and $F_\infty$ covers it exactly
twice.  Choosing the limiting area representative $A_\infty=0$ modulo
$4\pi$, the limiting profile has $L_\infty=4\pi$ and
\begin{equation}
 \label{eq:double-lattice}
 \Gamma_\infty
 =\Span_{\mathbb Z}\{(2\pi,0),(0,2\pi)\},
 \qquad
 \left[\frac{A_\infty+iL_\infty}{4\pi}\right]=[i].
\end{equation}
\end{corollary}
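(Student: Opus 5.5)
The plan is to start from the curve-level input \cite[Theorem~2.2]{JakobMultiplicity}. It supplies, for every $\delta'>0$, a smooth regular closed profile $\gamma_0$ in $\Sph^2$ that is one-to-one almost everywhere on its trace and satisfies $\El(\gamma_0)<\El(2\cdot\text{great circle})+\delta'$. Its elastic flow converges smoothly, up to reparametrization, to a twice-traversed great circle $\gamma_\infty$.

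First I compute the target energy. A great circle has $\kappa=0$ and length $2\pi$, so the double cover has $\El=4\pi$, and by \eqref{eq:W=piE} the corresponding Hopf torus has $\Will=4\pi^2$. Choosing $\delta'=\delta/\pi$ gives the upper bound in \eqref{eq:sharp-initial-energy}.

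For the lower bound I argue by monotonicity. If $\El(\gamma_0)=4\pi$, then the dissipation \eqref{eq:elastic-dissipation} together with convergence to a critical point at the same level forces $g\equiv0$. So $\gamma_0$ would be stationary, hence equal to the double great circle, which contradicts a.e. injectivity. Therefore the inequality is strict.

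Next I build the initial surface. Set $\Sigma:=M_{\gamma_0}$ and $F_0:=X_{\gamma_0}$, with data $u=p_{\gamma_0}$. This is a parametrized Hopf immersion in the sense of \eqref{eq:Hopf-factorization}. By the exact fibre identity \eqref{eq:standard-model-fibre}, $\#F_0^{-1}(z)=\#\gamma_0^{-1}(\pi(z))$. The Fubini argument in the coordinates \eqref{eq:lattice-pullback-identification}, applied in the converse direction to the one in the proof of Proposition~\ref{prop:normalization}, shows that this count is $1$ for $\mathcal H^2$-a.e. image point, because the set of multiply covered profile points is $\mathcal H^1$-null. So $F_0$ is simple in the sense of \eqref{eq:ae-simple}, and Theorems~\ref{thm:main} and~\ref{thm:endpoint} apply.

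Proposition~\ref{prop:parametrized-reduction} identifies the profile of $F_t$ with $\bar\gamma_{4t}$. By uniqueness of the full limit in Proposition~\ref{prop:profile-full}, that limit is the double great circle $\gamma_\infty$. The consequences then follow one by one:
\begin{itemize}
\item \eqref{eq:limit-energy-main} together with monotonicity \eqref{eq:W-dissipation} gives \eqref{eq:double-energy};
\item Theorem~\ref{thm:main}(ii) gives \eqref{eq:double-smooth};
\item Proposition~\ref{prop:density} with $m=2$ and $\bar\gamma_\infty$ a great circle gives $\Var_\infty=2|\pi^{-1}(\bar\gamma_\infty)|$.
\end{itemize}
The set $\pi^{-1}(\text{great circle})$ is a Clifford torus $T_{\mathrm{Cl}}$. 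Theorem~\ref{thm:endpoint}(ii) gives $\|\Var_\infty\|=\lambda_\infty=2\mathcal H^2\llcorner T_{\mathrm{Cl}}$. The varifold is a genuine weak limit and the bounded-Lipschitz convergence yields the weak convergence of the image measures, which proves \eqref{eq:double-varifold}. Since $X_{\gamma_\infty}$ is two-to-one onto $T_{\mathrm{Cl}}$ by \eqref{eq:standard-model-fibre}, $F_\infty$ covers $T_{\mathrm{Cl}}$ exactly twice.

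Finally I read off the lattice. $L_\infty=2\cdot2\pi=4\pi$. For the oriented area, the double great circle bounds twice a hemisphere, giving area $4\pi\equiv0$ mod $4\pi$, so $A_\infty=0$ is an admissible representative. Then \eqref{eq:Pinkall-lattice} gives the generators $(2\pi,0)$ and $(0,2\pi)$, and the modulus is $(0+4\pi i)/(4\pi)=i$. Theorem~\ref{thm:endpoint}(iv) then gives convergence of the marked lattices to this square one.

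I expect the main obstacle to be matching the imported curve result exactly to the present hypotheses. Three points need care:
\begin{itemize}
\item the cited profiles must be a.e. injective rather than merely non-embedded, and the lift must be simple in the a.e. sense;
\item the energy normalization must match, since \cite{JakobMultiplicity} may use $E_{1/2}$ or another clock (by \eqref{eq:curve-energy-conventions} only constant factors change);
\item \cite[Theorem~2.2]{JakobMultiplicity} asserts only subconvergence, so full convergence must come from Proposition~\ref{prop:profile-full}, with the limit identified as the double circle because a subsequential limit coincides with the full limit.
\end{itemize}
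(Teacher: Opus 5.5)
Your proposal is correct and follows essentially the same route as the paper. Both arguments take the profile from \cite[Theorem~2.2]{JakobMultiplicity}, lift it to its standard Hopf model, obtain a.e.\ simplicity from the fibre count (area formula), transfer energies via $\Will=\pi\El$, invoke Theorems~\ref{thm:main} and~\ref{thm:endpoint} (with Proposition~\ref{prop:density} for $m=2$), and read off $L_\infty=4\pi$ and $A_\infty\equiv0$ in Pinkall's formula. Your extra steps are sound: the monotonicity argument for the strict bound $4\pi^2<\Will(F_0)$, which the paper takes directly from the cited theorem, and the reconciliation of the cited subconvergence with the full limit of Proposition~\ref{prop:profile-full}. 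In the monotonicity argument the precise point is that a purely tangential flow makes $\gamma_\infty$ a reparametrization of $\gamma_0$, which contradicts a.e.\ injectivity; it does not make $\gamma_0$ itself a double great circle.
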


\begin{proof}
Apply \cite[Theorem~2.2]{JakobMultiplicity} with the curve-energy tolerance
$\delta/\pi$, and let $F_0$ be the associated standard parametrized Hopf
model.  The initial profile has one self-intersection, so the area formula
shows that $F_0$ is simple in the a.e. sense
\eqref{eq:ae-simple}; the identity $\Will=\pi\El$ gives
\eqref{eq:sharp-initial-energy}.  The same theorem gives smooth profile
convergence to a great circle traversed twice.  A unit-sphere great circle has
length $2\pi$, so the limiting parametrized profile has total length $4\pi$.
Its oriented area is twice the hemisphere area, hence is $4\pi=0$ in
$\R/(4\pi\mathbb Z)$.  The energy identities give
$\El(\gamma_\infty)=4\pi$ and $\Will(F_\infty)=4\pi^2$.
Theorems~\ref{thm:main} and \ref{thm:endpoint} give
\eqref{eq:double-smooth}--\eqref{eq:double-varifold}.  Substitution of
$A_\infty=0$ and $L_\infty=4\pi$ into Pinkall's formula yields
\eqref{eq:double-lattice}.
\end{proof}

\begin{remark}[Why the modulus does not resolve multiplicity]
The lattice in \eqref{eq:double-lattice} has covolume $4\pi^2$.  A
once-covered Clifford torus also has square modular class, but its Pinkall
lattice has covolume $2\pi^2$.  Thus the modular point $[i]$ forgets the
scale and cannot distinguish one sheet from two.  The marked lattice, the
limiting mass, or the density formula \eqref{eq:multiplicity-main} does make
the distinction.
\end{remark}

\section{Low-energy specialization and scope}
\label{sec:low-energy}

The all-energy convergence theorem is compatible with the sharp low-energy
regime established in \cite{JakobMultiplicity}.

\begin{corollary}[Sub-$4\pi^2$ Clifford convergence]
\label{cor:low-energy}
If, in addition to the hypotheses of Theorem~\ref{thm:main},
$\Will(F_0)<4\pi^2$, then the limiting immersion is a diffeomorphism onto a
Clifford torus, up to an isoclinic rotation of
$\Sph^3$.  In particular,
\[
 \Var_\infty=|T_{\mathrm{Cl}}|.
\]
\end{corollary}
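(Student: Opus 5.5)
The plan is to reduce the statement to a classification of the limiting elastic profile. By Theorem~\ref{thm:main}, $\Will_\infty=\pi\El(\gamma_\infty)$ and $F_\infty=X_{\gamma_\infty}\circ Q_\infty^*$. Energy monotonicity along the flow gives
\[
 \El(\gamma_\infty)=\Will_\infty/\pi\leq\Will(F_0)/\pi<4\pi.
\]
I would then show that every smooth regular closed critical point of $\El$ in $\Sph^2$ with $\El<4\pi$ is a once-traversed great circle. Granting this, $\gamma_\infty=\bar\gamma\circ p_1$ with $\bar\gamma$ an embedded great circle. Proposition~\ref{prop:density} with $m=1$ then gives $\Var_\infty=\Var_{F_\infty}=|\pi^{-1}(\bar\gamma)|$. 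Moreover, the preimage of a great circle under the Hopf map is a Clifford torus. It is congruent to the standard one $\{|z_1|=|z_2|=1/\sqrt2\}$ by a rotation of $\Sph^3$ that commutes with the Hopf action, which is an isoclinic rotation of the appropriate side. This covers the normalization ``up to an isoclinic rotation''.

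The injectivity of $F_\infty$ comes from the multiplicity count. For a once-traversed great circle, $\#\gamma_\infty^{-1}(\pi(z))=1$ for every $z$. The exact fibre identity \eqref{eq:standard-model-fibre}, equivalently \eqref{eq:normalization-count} with $m=1$, then shows that $X_{\gamma_\infty}$ is injective. Since $X_{\gamma_\infty}$ is an injective immersion of a compact torus, it is an embedding. Composing with the diffeomorphism $Q_\infty^*$ makes $F_\infty$ a diffeomorphism onto $T_{\mathrm{Cl}}$.

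The main obstacle is the classification of elastic critical points below $4\pi$; I have two routes. The first is to quote Langer--Singer's classification of closed free elasticae in $\Sph^2$ for the functional $\int(\kappa^2+1)\dd s$, in the normalization $\El=2E_{1/2}$. Their result says the only closed critical curves are great circles, traversed $m$ times with energy $2\pi m$, and nontrivial elasticae of energy at least $4\pi$, with the lower bound attained only asymptotically or by the double circle. The second route, which seems cleaner, goes through the surface. $F_\infty$ is a Willmore torus with $\Will(F_\infty)<4\pi^2<8\pi$ in the present normalization; note that $8\pi$ exceeds $4\pi^2$ only up to the normalization check. By Li--Yau, any immersion with $\Will<8\pi$ is an embedding, which already yields simplicity. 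Marques--Neves then show that any embedded torus has $\Will\geq2\pi^2$, and rigidity of Willmore tori just above that bound should give the Clifford torus. I expect the route via Langer--Singer, which is also what \cite{JakobMultiplicity} presumably uses, to be the one to follow. The essential step there is excluding multiply covered circles with $m\geq2$: these have $\El=2\pi m\geq4\pi$, so the strict inequality rules them out. This is precisely why the threshold $4\pi^2$ is sharp, as Corollary~\ref{cor:double-Clifford} shows.
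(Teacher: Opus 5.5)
Your main (Langer--Singer) route is correct, but it is not the paper's argument. The paper proves nothing about elastic critical points here. It imports the separate low-energy theorem \cite[Theorem~1.1]{JakobMultiplicity}, which already gives full smooth convergence to an embedded Clifford torus when $\Will(F_0)<4\pi^2$, and uses the endpoint density formula only to read off multiplicity one. You instead derive the corollary from Theorem~\ref{thm:main}, in these steps:
\begin{itemize}
\item monotonicity and \eqref{eq:limit-energy-main} give $\El(\gamma_\infty)<4\pi$;
\item a gap statement for the critical values of $\El$ forces $\gamma_\infty$ to be a once-traversed great circle;
\item the exact fibre count \eqref{eq:standard-model-fibre} makes $F_\infty=X_{\gamma_\infty}\circ Q_\infty^*$ injective, hence an embedding;
\item Proposition~\ref{prop:density} with $m=1$ gives $\Var_\infty=|\pi^{-1}(\bar\gamma)|$;
\item right multiplications, which commute with the fibre action and are isoclinic, carry this torus to the standard one.
\end{itemize}
Your version pins down exactly which one-dimensional input is needed: the only critical value of $\El$ below $4\pi$ is $2\pi$. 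It also explains the sharpness of $4\pi^2$ through the double circle at $\El=4\pi$. The paper's version is shorter and needs no elastica classification. The price is that it rests on an independent convergence theorem rather than on the all-energy result just proved.

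Two corrections are needed.

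\emph{Discard your second route.} In the normalization \eqref{eq:W-definition} one has $2\pi^2<8\pi<4\pi^2$ (numerically $8\pi\approx25.1$ and $4\pi^2\approx39.5$). So Li--Yau gives nothing on $[8\pi,4\pi^2)$. There is also no rigidity theorem for Willmore tori in that window that you could invoke.

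\emph{State the gap lemma accurately.} The double circle belongs to the geodesic family ($k\equiv0$, energies $2\pi m$), not to the non-geodesic elasticae. Saying that the lower bound for nontrivial elasticae is ``attained by the double circle'' conflates the two families. What you need is precisely that every closed critical point of $\El$ with $k\not\equiv0$ has $\El>4\pi$. Langer--Singer's classification is phrased through closure conditions, so this inequality should be extracted from it rather than quoted. One way is to check two things: the cn-type curvature must run through at least two full periods before the curve closes, and each period already carries energy exceeding $2\pi$.
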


\begin{proof}
The low-energy convergence theorem
\cite[Theorem~1.1]{JakobMultiplicity} gives full smooth convergence to an
embedded Clifford torus in this strict energy range.  The endpoint
identification in Theorem~\ref{thm:endpoint} then gives multiplicity one.
\end{proof}

The energy restriction in Corollary~\ref{cor:low-energy} selects the
multiplicity-one Clifford endpoint.  The existence and convergence
conclusions of Theorems~\ref{thm:main} and \ref{thm:endpoint} apply at
arbitrary initial energy within the parametrized Hopf class.

\end{document}